\theoremstyle{nonumberplain}
\newtheorem{proof}{Proof.}
\def\Z{\mathbb Z}
\def\Q{\mathbb Q}
\def\R{\mathbb R}
\theoremstyle{plain}
\newtheorem{lemma}{Lemma}[section]
\newtheorem{proposition}{Proposition}[section]
\newtheorem{theorem}{Theorem}[section]
\newtheorem{claim}{Claim}[section]
\newtheorem*{problem}{Lifting Problem}
\newtheorem{definition}{Definition}[section]
\newtheorem{example}{Example}[section]
\newtheorem{remark}{Remark}[section]
\newcommand\blfootnote[1]{
\begingroup 
\renewcommand\thefootnote{}\footnote{#1}
\addtocounter{footnote}{-1}
\endgroup 
}
\title{Lower bound for Buchstaber invariants of real universal complexes}
\author{Qifan Shen}
\date{}
\begin{document}

\renewcommand{\theequation}{\thesection.\arabic{equation}}
\setcounter{equation}{0} \maketitle

\vspace{-1.0cm}

\bigskip

\noindent {\bf Abstract.} In this article, we prove that Buchstaber invariant of 4-dimensional real universal complex is no less than 24 as a follow-up to the work of Ayzenberg and Sun. Moreover, a lower bound for Buchstaber invariants of $n$-dimensional real universal complexes is given as an improvement of result of Erokhovets.
\blfootnote{Mathematics Subject Classification (2020): 57S25, 52B05, 05E45 \\
Key words and phrases: Buchstaber invariant, Universal complex, Lower bound, Lifting problem \\
Partially supported by the grant from NSFC (No. 11971112).}

\renewcommand{\theequation}{\thesection.\arabic{equation}}

\section{Introduction} \label{Introduction}
\setcounter{equation}{0}

\emph{Moment-angle complex} and its real counterpart are fundamental objects in toric topology as they construct links among algebraic geometry, sympletic geometry and combinatorics (see Definition \ref{moment-angle complex}). Moreover, they are equipped with certain group actions, yielding applications in both non-equivariant and equivariant categories (see \cite{BP15} for more details). 

For a given simplicial complex $K$ on $m$ vertices, the associated real moment-angle complex $\R\mathcal{Z}_{K}$ (resp. moment-angle complex $\mathcal{Z}_{K}$) admits a natural $\Z_{2}^{m}$-action (resp. $T^{m}$-action) by coordinate-wise sign permutation (resp. rotation). However, these actions fail to be free unless $K$ is the empty complex, leading to the definition of \emph{real Buchstaber invariant} $s_{\R}(K)$ (resp. \emph{Buchstaber invariant} $s(K)$) as the maximal rank of subgroup (resp. toric subgroup) that acts freely on $\R\mathcal{Z}_{K}$ (resp. $\mathcal{Z}_{K}$) (see Definition \ref{Buchstaber invariant}). These two types of invariants measure the degree of symmetry of the corresponding complexes and were first introduced in \cite{BP02} for simplicial spheres with generalization in \cite{FM11} for arbitrary simplicial complexes.

Buchstaber asked for a combinatorial description of $s(K)$ in \cite{BP02}, which turns out to be quite hard and remains open till today. As a matter of fact, calculation of $s(K)$ is not completed even for the special case where $K$ is a dual cyclic polytope and partial results can be found in \cite{Er14}.

On the other hand, there exists a general bound for $s_{\R}(K)$ and $s(K)$:
\begin{equation*}
m-\gamma(K)\leq s(K)\leq s_{\R}(K)\leq m-n 
\end{equation*}
where $K$ is an $(n-1)$-dimensional simplicial complex on $m$ vertices and $\gamma(K)$ stands for ordinary chromatic number of $K$. This formula can be derived from relations among generalized chromatic numbers in a systematic manner \cite{Ay10}. Indeed, with the help of \emph{real universal complex} $\mathcal{K}_{1}^{n}$ and \emph{universal complex} $\mathcal{K}_{2}^{n}$ introduced in \cite{DJ91}, $s_{\R}(K)$ and $s(K)$ can be expressed as $m-r_{\R}(K)$ and $m-r(K)$ respectively, where $r_{\R}(K)$ and $r(K)$ are minimal rank of certain colorings on $K$ (see Section \ref{Preliminaries} or \cite{Iz01}). Moreover, the upper bound of $s(K)$ and $s_{\R}(K)$ is controlled by the sum of rational Betti numbers since $2^{m-n}\leq \sum_{i}\mathrm{dim}H^{i}(\mathcal{Z}_{K};\Q)=\sum_{i}\mathrm{dim}H^{i}(\R\mathcal{Z}_{K};\Q)$ was proved as a special case of Halperin-Carlsson conjecture (see \cite{CL12, Us11}).

The general inequality $s(K)\leq s_{\R}(K)$ follows from the fact that involutions on $T^{m}$ and $\mathcal{Z}_{\mathit{K}}$ induced by complex conjugation have fixed point sets $\Z_{2}^{m}$ and $\R\mathcal{Z}_{\mathit{K}}$ respectively. Thus, any free $T^{r}$-action on $\mathcal{Z}_{\mathit{K}}$ induces a free $\Z_{2}^r$-action on $\R\mathcal{Z}_{\mathit{K}}$. Meanwhile, the special case $s(K)=s_{\R}(K)$ is closely related to \emph{Lifting problem} (see Section \ref{Preliminaries}) presented by L$\ddot{\mathrm{u}}$ at the conference on toric topology held in Osaka in November 2011$\footnote{http://www.sci.osaka-cu.ac.jp/masuda/toric/torictopology2011\_osaka.html}$. Let $\Delta(K)=s_{\R}(K)-s(K)=r(K)-r_{\R}(K)$ denote the difference, then vanishing of $\Delta(K)$ is necessary for the validity of Lifting problem on $K$.

For real universal complex $\mathcal{K}_{1}^{n}$, we have the following monotonicity and universal property:
\begin{proposition}
\rm{\cite[Theorem 3.3]{Sun17}} $\Delta(\mathcal{K}_{1}^{n})\leq \Delta(\mathcal{K}_{1}^{n+1})$.
\end{proposition}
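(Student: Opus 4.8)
The plan is to turn the monotonicity of $\Delta$ into a statement purely about the integral coloring number $r$, and then exploit the fact that $\mathcal{K}_1^{n}$ sits inside $\mathcal{K}_1^{n+1}$ as the base of a cone. Since the identity map realizes a real coloring of $\mathcal{K}_1^{n}$ by $n$ vectors, while $\mathcal{K}_1^{n}$ carries an $(n-1)$-simplex (any basis of $\Z_2^{n}$), one has $r_{\R}(\mathcal{K}_1^{n})=n$. Combined with the relation $\Delta(K)=r(K)-r_{\R}(K)$ recorded above, this rewrites both sides as $\Delta(\mathcal{K}_1^{n})=r(\mathcal{K}_1^{n})-n$ and $\Delta(\mathcal{K}_1^{n+1})=r(\mathcal{K}_1^{n+1})-(n+1)$, so that the asserted inequality is equivalent to the single estimate
\begin{equation*}
r(\mathcal{K}_1^{n+1})\ge r(\mathcal{K}_1^{n})+1 .
\end{equation*}
Thus it suffices to manufacture, from any integral coloring of $\mathcal{K}_1^{n+1}$ of rank $r$, an integral coloring of $\mathcal{K}_1^{n}$ of rank $r-1$.

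The geometric input is the coordinate inclusion $\Z_2^{n}\hookrightarrow\Z_2^{n+1}$, $w\mapsto(w,0)$, which is linear and injective and therefore identifies $\mathcal{K}_1^{n}$ with the full subcomplex of $\mathcal{K}_1^{n+1}$ spanned by the vertices $(w,0)$. I would first record the key observation that the extra vertex $e_{n+1}=(0,\dots,0,1)$ is linearly independent over $\Z_2$ from every vector $(w,0)$, and that this independence is insensitive to the first $n$ coordinates; consequently, for each simplex $\sigma$ of $\mathcal{K}_1^{n}$ the set $\sigma\cup\{e_{n+1}\}$ is again a simplex of $\mathcal{K}_1^{n+1}$. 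In other words, $\mathcal{K}_1^{n}$ together with $e_{n+1}$ spans a copy of $\mathrm{Cone}(\mathcal{K}_1^{n})$ with apex $e_{n+1}$ inside $\mathcal{K}_1^{n+1}$.

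Now I would fix an integral coloring $\lambda\colon V(\mathcal{K}_1^{n+1})\to\Z^{r}$ realizing $r=r(\mathcal{K}_1^{n+1})$ and set $c=\lambda(e_{n+1})$. Because $\{e_{n+1}\}$ is a simplex, $c$ is part of a $\Z$-basis and in particular primitive, so the quotient projection $p\colon\Z^{r}\to\Z^{r}/\langle c\rangle\cong\Z^{r-1}$ is well defined with target free of rank $r-1$. I then claim that $\bar\lambda:=p\circ(\lambda|_{\mathcal{K}_1^{n}})$ is a valid integral coloring of $\mathcal{K}_1^{n}$. For a simplex $\sigma=\{w_1,\dots,w_k\}$ of $\mathcal{K}_1^{n}$, the coning observation gives that $\lambda(w_1),\dots,\lambda(w_k),c$ is a unimodular system in $\Z^{r}$, and the linear-algebraic lemma that reducing a unimodular system modulo one of its own members $c$ leaves the remaining vectors unimodular in $\Z^{r}/\langle c\rangle$ (extend $\{\lambda(w_i),c\}$ to a $\Z$-basis of $\Z^{r}$, then pass to the quotient) shows $\bar\lambda(w_1),\dots,\bar\lambda(w_k)$ is unimodular. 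Hence $\bar\lambda$ has rank $r-1$, giving $r(\mathcal{K}_1^{n})\le r-1=r(\mathcal{K}_1^{n+1})-1$ and therefore $\Delta(\mathcal{K}_1^{n})\le\Delta(\mathcal{K}_1^{n+1})$.

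The step I expect to be the crux is precisely this passage from unimodularity in $\Z^{r}$ to unimodularity in the rank-$(r-1)$ quotient: it is here that both the coning property $\sigma\cup\{e_{n+1}\}\in\mathcal{K}_1^{n+1}$ and the primitivity of $c$ are used, and it is what forces the genuine drop from $r$ to $r-1$ rather than the trivial bound $r(\mathcal{K}_1^{n})\le r(\mathcal{K}_1^{n+1})$ coming from mere restriction to a subcomplex. Everything else — assembling the reduction of the first paragraph and verifying $r_{\R}(\mathcal{K}_1^{n})=n$ — is routine bookkeeping.
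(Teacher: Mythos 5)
Your proof is correct and complete: the reduction to $r(\mathcal{K}_1^{n+1})\ge r(\mathcal{K}_1^{n})+1$ via $\Delta(K)=r(K)-r_{\R}(K)$ and $r_{\R}(\mathcal{K}_1^{n})=n$, the cone observation $\sigma\cup\{e_{n+1}\}\in\mathcal{K}_1^{n+1}$ for every $\sigma\in\mathcal{K}_1^{n}$, and the lemma that reducing a unimodular system modulo its own primitive member $c=\lambda(e_{n+1})$ leaves a unimodular system in $\Z^{r}/\langle c\rangle\cong\Z^{r-1}$ all check out. The paper itself states this proposition without proof, citing Sun's Theorem 3.3, and your argument is essentially that standard one (there the primitive vector $\lambda(e_{n+1})$ is normalized to a basis vector by an automorphism of $\Z^{r}$ and a coordinate is dropped, which is exactly your quotient map written in coordinates).
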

\begin{proposition}
\rm{\cite[Proposition 4.1]{Sun17}} $r_{\R}(K)\leq r_{\R}(\mathcal{K}_{1}^{n}) \Longrightarrow \Delta(K)\leq \Delta(\mathcal{K}_{1}^{n})$.
\end{proposition}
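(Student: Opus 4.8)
The plan is to reduce the statement to the single comparison $\Delta(K)\le\Delta(\mathcal{K}_{1}^{s})$ with $s:=r_{\R}(K)$, and then invoke the monotonicity of Proposition 1. It is worth noting first why the most naive attempt is insufficient. By definition of $r_{\R}(K)$ there is a nondegenerate simplicial map $f\colon K\to\mathcal{K}_{1}^{s}\subseteq\mathcal{K}_{1}^{n}$, and composing it with an optimal integral coloring of $\mathcal{K}_{1}^{n}$ would only yield $r(K)\le r(\mathcal{K}_{1}^{n})$, hence $\Delta(K)\le r(\mathcal{K}_{1}^{n})-s$, which exceeds the target $\Delta(\mathcal{K}_{1}^{n})=r(\mathcal{K}_{1}^{n})-n$ whenever $s<n$. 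So the real point is to color through $\mathcal{K}_{1}^{s}$ rather than $\mathcal{K}_{1}^{n}$, thereby matching the subtracted real rank.

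First I would record that $r_{\R}(\mathcal{K}_{1}^{s})=s$: the identity inclusion colors $\mathcal{K}_{1}^{s}$ with rank $s$, and since $\mathcal{K}_{1}^{s}$ contains an $(s-1)$-simplex no smaller rank is possible. In particular the hypothesis $r_{\R}(K)\le r_{\R}(\mathcal{K}_{1}^{n})=n$ simply says $s\le n$. Next comes the core step: choose an optimal integral coloring $g\colon\mathcal{K}_{1}^{s}\to\mathcal{K}_{2}^{\,r(\mathcal{K}_{1}^{s})}$ and form $g\circ f$. For each simplex $\sigma$ of $K$, the set $f(\sigma)$ is $\Z_{2}$-linearly independent (a simplex of $\mathcal{K}_{1}^{s}$), so $g$ sends it to a unimodular set (a simplex of $\mathcal{K}_{2}^{\,r(\mathcal{K}_{1}^{s})}$); since both maps are injective and rank-preserving on each simplex, so is the composite. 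Thus $g\circ f$ is an integral coloring of $K$ of rank $r(\mathcal{K}_{1}^{s})$, giving $r(K)\le r(\mathcal{K}_{1}^{s})$, and therefore
\[
\Delta(K)=r(K)-r_{\R}(K)=r(K)-s\le r(\mathcal{K}_{1}^{s})-s=\Delta(\mathcal{K}_{1}^{s}).
\]

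Finally, since $s\le n$, iterating Proposition 1 gives $\Delta(\mathcal{K}_{1}^{s})\le\Delta(\mathcal{K}_{1}^{n})$, and the two inequalities chain to $\Delta(K)\le\Delta(\mathcal{K}_{1}^{n})$, as desired. The argument is formal once the dictionary ``rank-$s$ real coloring of $K$'' $=$ ``nondegenerate simplicial map $K\to\mathcal{K}_{1}^{s}$'' is in place, so I do not expect a deep obstacle; the one step demanding care is the functoriality claim $r(K)\le r(\mathcal{K}_{1}^{s})$. Concretely, I must verify that $\Z_{2}$-linear independence of $f(\sigma)$ is genuinely enough to force $g(f(\sigma))$ to be unimodular, i.e. that nondegeneracy is preserved under composing a real coloring with the universal integral coloring $g$. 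This is precisely where the universal property of $\mathcal{K}_{1}^{s}$ (as opposed to that of $K$) does the work, letting the single complex $\mathcal{K}_{1}^{s}$ govern $r(K)$ for every $K$ with $r_{\R}(K)=s$.
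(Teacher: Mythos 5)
Your proof is correct: the composition of the optimal real coloring $f\colon K\to\mathcal{K}_{1}^{s}$ with an optimal integral coloring of $\mathcal{K}_{1}^{s}$ gives $r(K)\le r(\mathcal{K}_{1}^{s})$, hence $\Delta(K)\le\Delta(\mathcal{K}_{1}^{s})$, and iterating Proposition 1 finishes the argument. The paper itself states this proposition without proof (citing Proposition 4.1 of \cite{Sun17}), and your route --- universality of $\mathcal{K}_{1}^{s}$ for complexes of real rank $s$, closure of non-degenerate simplicial maps under composition, plus the monotonicity of Proposition 1 --- is precisely the intended argument behind that citation.
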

Therefore, the value of $\Delta(\mathcal{K}_{1}^{n})$ is significant as it provides an upper bound for general cases. In \cite{Ay10} and \cite{Ay16}, Ayzenberg showed $\Delta(\mathcal{K}_{1}^{n})=0$ for $n=1,2,3$ and $\Delta(\mathcal{K}_{1}^{4})>0$ respectively. In \cite{Sun17}, $\Delta(\mathcal{K}_{1}^{4})=1$ was confirmed by Sun. Furthermore, an upper bound $\Delta(\mathcal{K}_{1}^{n})\leq 3\cdot2^{n-2}-1-n$ for $n\geq 2$ can be viewed as a corollary of part of the theorem in \cite{Er14} (see Remark \ref{Erokhovets}).

\vspace{0.5cm}
Main aim of this article is to further estimate the upper bound for $\Delta(\mathcal{K}_{1}^{n})$, which can be deduced from construction of certain colorings. Since $s(\mathcal{K}_{1}^{n})+\Delta(\mathcal{K}_{1}^{n})=2^{n}-1-n$ by definition, upper bound estimation of $\Delta(\mathcal{K}_{1}^{n})$ is equivalent to lower bound estimation of $s(\mathcal{K}_{1}^{n})$. Explicit construction of non-degenerate simplicial maps from $\mathcal{K}_{1}^{5}$ to $\mathcal{K}_{2}^{7}$ and from $\mathcal{K}_{1}^{n}$ to $\mathcal{K}_{2}^{2^{n-2}+1}$ for $n\geq 2$ yields the following theorems: 

\begin{theorem} \label{K15 theorem}
$\Delta(\mathcal{K}_{1}^{5})\leq 2$, i.e., $s(\mathcal{K}_{1}^{5})\geq 24$.
\end{theorem}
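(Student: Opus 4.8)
The plan is to convert the inequality into the construction of one explicit simplicial map and then verify a unimodularity condition. Recall from Section~2 that $s(\mathcal{K}_{1}^{5}) = m - r(\mathcal{K}_{1}^{5})$ with $m = 2^{5}-1 = 31$, and that $r_{\R}(\mathcal{K}_{1}^{5}) = 5$ (the identity coloring is non-degenerate of rank $5$, and $\mathcal{K}_{1}^{5}$ contains a $4$-simplex). Hence $\Delta(\mathcal{K}_{1}^{5}) = r(\mathcal{K}_{1}^{5}) - 5$ and $s(\mathcal{K}_{1}^{5}) = 26 - \Delta(\mathcal{K}_{1}^{5})$, so the two formulations of the statement are equivalent. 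Both follow once I exhibit a non-degenerate simplicial map $\phi \colon \mathcal{K}_{1}^{5} \to \mathcal{K}_{2}^{7}$, since its existence yields $r(\mathcal{K}_{1}^{5}) \le 7$ and therefore $\Delta(\mathcal{K}_{1}^{5}) \le 2$, that is, $s(\mathcal{K}_{1}^{5}) \ge 24$.

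Such a map amounts to assigning to each of the $31$ nonzero vectors $v \in \Z_{2}^{5}$ a primitive vector $\phi(v) \in \Z^{7}$ so that the images of every simplex of $\mathcal{K}_{1}^{5}$ form a unimodular subset of $\Z^{7}$ (a partial $\Z$-basis). Two reductions make this tractable. First, since unimodularity is inherited by subsets and every $\Z_{2}$-independent set extends to a basis of $\Z_{2}^{5}$, it suffices to check the condition on maximal simplices, i.e.\ to verify that the images of each $\Z_{2}$-basis $\{v_{1},\dots,v_{5}\}$ span a unimodular $5$-subset; non-degeneracy is then automatic, as unimodular vectors are $\Z$-linearly independent, hence distinct. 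Second, five integer vectors in $\Z^{7}$ are unimodular exactly when they have rank $5$ modulo every prime $p$, so the verification splits prime by prime.

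I would take $\phi$ of the form $\phi(v) = (\hat v, c(v))$, where $\hat v \in \Z^{5}$ is the $\{0,1\}$-lift of $v$ and $c(v) \in \Z^{2}$ is a correction term with entries in $\{0,\pm 1\}$. With this shape, reduction modulo $2$ recovers $v$ in the first five coordinates, so the rank is automatically $5$ at $p = 2$ for every basis. Keeping all entries in $\{0,\pm 1\}$ bounds the $5\times 5$ minors by the Hadamard inequality, so the truncated determinant $D = \det(\hat v_{1},\dots,\hat v_{5})$ is a nonzero odd integer; for every prime $p \nmid D$ the rank is already $5$. The remaining role of the two extra coordinates $c(v)$ is precisely to repair the bases with $|D| > 1$, by supplying a further $5 \times 5$ minor coprime to each odd prime factor of $D$.

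The hard part is showing that one fixed choice of $c$ simultaneously repairs every problematic basis. There are tens of thousands of bases of $\Z_{2}^{5}$, so a blind enumeration is hopeless; I would control it by exploiting the residual symmetry of the assignment, organizing the bases into orbits under the subgroup of $\mathrm{GL}_{5}(\Z_{2})$ that preserves $\phi$ up to an admissible coordinate change on $\Z^{7}$, reducing to a short list of determinant types for the truncated lifts, and checking for each type that some minor involving $c$ is a unit modulo every relevant odd prime. This combinatorial bookkeeping, rather than the reduction steps, is where the real difficulty lies, and the decisive point is to choose $\hat v$ and $c(v)$ so as to minimize the number of determinant types that must be inspected.
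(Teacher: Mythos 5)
Your reduction is exactly the paper's: rewrite the statement as $r(\mathcal{K}_{1}^{5})\leq 7$, realize a candidate coloring as $\Lambda(\boldsymbol{v})=(\boldsymbol{v},s,t)\in\Z^{7}$ whose first five coordinates are the $0$--$1$ lift (the paper's condition $p_{j}\circ\Lambda=id_{j}$), note that mod-$2$ nonsingularity forces the truncated determinant to be odd and Hadamard's bound \cite{Ha93} confines it to $\{\pm1,\pm3,\pm5\}$, and observe that being part of a $\Z$-basis of $\Z^{7}$ is a prime-by-prime (equivalently gcd) condition on the modified $5\times5$ minors --- the paper packages this as $(\star1)$/$(\star2)$ via the Chinese Remainder Theorem. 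All of this is sound, and one small inessential deviation aside (your restriction $c(\boldsymbol{v})\in\{0,\pm1\}^{2}$ plays no role in your Hadamard step, which only sees the first five rows, and the paper's working solution in fact uses the value $2$; by its Remark 5 only the residues mod $15$ and mod $3$ matter), you have correctly identified what must be proved.

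The genuine gap is that everything after ``the hard part'' is asserted rather than done, and that assertion is the entire content of the paper's Section 3. You never exhibit $c$, never produce the ``short list of determinant types,'' and never perform the verification. Concretely, the paper (i) classifies, up to row and column permutation, the matrices in $GL(5,\Z_{2})$ with integral determinant $\pm5$ --- exactly three classes $M_{1},M_{2},M_{3}$ (Claims 1--2), built on Sun's Lemma for $\pm3$ in $GL(4,\Z_{2})$ and a new Lemma 2 classifying the $\pm2$ case --- and those with determinant $\pm3$ (the types $N_{1}^{*}$ through $N_{9}^{*}$, via Claims 3--4); and (ii) exhibits explicit maps $\phi,\psi$ constant on the weight classes $V_{1},\dots,V_{5}$ and checks the gcd condition class by class. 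Moreover, the verification exposes a substantive point your sketch glosses over: a \emph{single} correction coordinate constant on weight classes provably fails, since for three matrices ($X_{1},X_{2},X_{3}$, of types $N_{3}^{b}$, $N_{3}^{c}$, $N_{5}^{b}$) every $\phi$-modified minor is divisible by $3$; the second coordinate $\psi$ is genuinely needed, with the disjunction ``$(\star1)$ or $(\star2)$'' invoked basis by basis. Your proposed symmetry reduction (orbits under a stabilizer subgroup of $\mathrm{GL}_{5}(\Z_{2})$) is in spirit what the paper does --- weight-constant values make all determinants invariant under row/column permutations --- but without carrying out the classification and naming a working pair (the paper's choice: $\phi=0,1,1,1,2$ on $V_{1},\dots,V_{5}$ and $\psi=2,0,1,0,2$), the existence of a simultaneous repair remains unproved, so the argument is incomplete precisely at its decisive step.
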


\begin{theorem} \label{K1n theorem}
For $n\geq 2$, $\Delta(\mathcal{K}_{1}^{n})\leq 2^{n-2}+1-n$, i.e., $s(\mathcal{K}_{1}^{n})\geq 3\cdot2^{n-2}-2$.
\end{theorem}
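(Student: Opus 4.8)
The plan is first to translate the statement into a coloring-existence problem. Since the identity map realizes $r_{\R}(\mathcal{K}_1^n)=n$ and $s(\mathcal{K}_1^n)=2^n-1-r(\mathcal{K}_1^n)$, the claimed inequality $\Delta(\mathcal{K}_1^n)\le 2^{n-2}+1-n$ is equivalent to $r(\mathcal{K}_1^n)\le 2^{n-2}+1$, i.e.\ to exhibiting a non-degenerate simplicial map $\mathcal{K}_1^n\to\mathcal{K}_2^{\,2^{n-2}+1}$. Unwinding the definitions, I must attach to every nonzero $v\in\Z_2^n$ a primitive integer vector $\lambda(v)\in\Z^{\,2^{n-2}+1}$ such that, whenever $v_1,\dots,v_k$ are $\Z_2$-linearly independent, the vectors $\lambda(v_1),\dots,\lambda(v_k)$ are unimodular over $\Z$ (extendable to a $\Z$-basis). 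Non-degeneracy then costs nothing: any two distinct nonzero vectors already form a $1$-simplex, so preserving independence forces their images to be distinct.

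To build $\lambda=\lambda_n$ I would induct on $n$, using the numerical coincidence $2^{n-2}+1=2(2^{n-3}+1)-1$; writing $N_n=2^{n-2}+1$ this reads $N_n=2N_{n-1}-1$ with $N_3=3$. For $n=2,3$ the naive lift $v\mapsto v$ already works, because any $\Z_2$-independent family of zero-one vectors in dimension at most $3$ is automatically unimodular, its nonzero maximal minors being $\pm1$. For the inductive step I split $\Z_2^n=\Z_2^{n-1}\oplus\langle e_n\rangle$ and treat the two halves separately: the vertices with last coordinate $0$ form a copy of $\mathcal{K}_1^{n-1}$ and are coloured by $\lambda_{n-1}$ placed in one coordinate block, while the $2^{n-1}$ vertices with last coordinate $1$ receive colours built from $\lambda_{n-1}$ together with the surplus coordinates and a sign adjustment, the two blocks being glued along a single shared coordinate that carries $e_n$.

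The hard part will be verifying unimodularity of $\lambda(v_1),\dots,\lambda(v_k)$ for every $\Z_2$-independent set after the gluing. I would organize this by how a simplex $\sigma$ meets the two halves: if $\sigma$ lies in the hyperplane $x_n=0$ the inductive hypothesis applies verbatim, and the mixed case is the genuine obstacle. The source of the difficulty --- and the reason the dimension must jump from $n$ up to $2^{n-2}+1$ --- is that $\Z_2$-independence of vectors $(h_i,1)$ amounts only to affine independence of the $h_i\in\Z_2^{n-1}$, which $\lambda_{n-1}$ does not control; this is precisely where a literal doubling fails and where the extra coordinates must be spent. I expect to handle the mixed case by a Schur-complement/Laplace reduction along the shared $e_n$-coordinate: using one last-coordinate-$1$ vertex to clear that coordinate in the others replaces each $(h_i,1)$ by a difference lying in $x_n=0$ and turns the affine dependencies into linear ones, collapsing the $k\times k$ determinant to a $(k-1)\times(k-1)$ minor that is $\pm1$ by induction. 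The calculation I must pin down is that these row operations, together with the chosen sign adjustment on the second block, contribute only a factor $\pm1$ and never an extra prime; making that adjustment uniform in $n$ is the crux of the construction.
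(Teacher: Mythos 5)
Your opening reduction is correct and matches the paper's (the statement is equivalent to exhibiting a non-degenerate simplicial map $\mathcal{K}_1^n\to\mathcal{K}_2^{2^{n-2}+1}$, i.e.\ a coloring sending $\Z_2$-independent sets to unimodular ones), and your arithmetic $N_n=2N_{n-1}-1$ is right. But the inductive step contains a genuine gap, not just a deferred computation: the coloring of the $2^{n-1}$ vertices with last coordinate $1$ is never defined, and the verification scheme you propose cannot invoke the inductive hypothesis. After clearing the shared coordinate by subtracting the column of one vertex $(h_1,1)$, the remaining columns are \emph{differences of colors} $\lambda(h_i,1)-\lambda(h_1,1)$. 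No coloring can be globally additive (if $f(u+v)=f(u)+f(v)$ for all distinct nonzero $u,v$, then $f(u)=f((u+v)+v)=f(u)+2f(v)$ forces $f\equiv 0$, contradicting primitivity), so these differences are not $\lambda_{n-1}$-images of the vectors $h_i+h_1$, and the hypothesis --- which speaks only of $\lambda_{n-1}$-images of $\Z_2$-independent sets --- says nothing about them. A concrete obstruction: take distinct nonzero $h_1,h_2,h_3\in\Z_2^{n-1}$ with $h_1+h_2+h_3=0$; then $(h_1,1),(h_2,1),(h_3,1)$ are independent in $\Z_2^n$, but the $h_i$ are \emph{dependent} in $\Z_2^{n-1}$, so induction gives no control over their first-block colors, and everything is thrown onto the unspecified ``surplus coordinates and sign adjustment,'' i.e.\ onto exactly the part of the construction you have not supplied. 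Moreover the surplus block has only $N_{n-1}-1=2^{n-3}$ coordinates, and already at $n=4$ (the first inductive step) the independent quadruple $(0,0,0,1),(1,0,0,1),(0,1,0,1),(0,0,1,1)$ lies entirely in the half $x_4=1$, so its four colors cannot be supported in a $3$-dimensional block: the second-half coloring must use the first block in an essential and carefully structured way.

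For comparison, the paper proves the theorem by a one-shot construction that builds the needed \emph{partial} additivity in by hand: fix a two-dimensional subspace $\{0,\boldsymbol{x},\boldsymbol{y},\boldsymbol{x}+\boldsymbol{y}\}$, partition $vt(\mathcal{K}_1^n)$ into $A_0=\{\boldsymbol{x},\boldsymbol{y},\boldsymbol{x}+\boldsymbol{y}\}$ and the $2^{n-2}-1$ nonzero cosets $A_i=\{\boldsymbol{a_i},\boldsymbol{a_i}+\boldsymbol{x},\boldsymbol{a_i}+\boldsymbol{y},\boldsymbol{a_i}+\boldsymbol{x}+\boldsymbol{y}\}$, send $\boldsymbol{x}\mapsto\boldsymbol{e_1}$, $\boldsymbol{y}\mapsto\boldsymbol{e_2}$, $\boldsymbol{a_i}\mapsto\boldsymbol{e_{i+2}}$, and extend additively \emph{within each coset only}. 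Linear dependencies then bound how many vertices a simplex can share with $A_0$ and with each $A_i$, and after column subtractions the image matrix becomes block-triangular with an identity block $I_{2^{n-2}-1}$ and a $2\times 2$ corner of determinant $\pm1$. Note that $2N_{n-1}-1=N_{n-1}+2^{n-3}$, and $2^{n-3}$ is precisely the number of new cosets created in passing from $\Z_2^{n-1}$ to $\Z_2^n$: a repaired version of your recursion would be forced to spend its fresh coordinates as coset coordinates $\boldsymbol{e_{i+2}}$ and would reconstruct the paper's map. The missing idea in your proposal is exactly this coset structure with additivity along a fixed small subspace, which is what substitutes for the global additivity your Schur-complement reduction implicitly assumes.
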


\begin{remark}
The construction process is equivalent to finding the solution to a system of nonlinear Diophantine equations. These equations all belong to a certain type discussed in \cite{Sa17}, where existence and classification problem of solutions to a single equation was solved. However, existence problem of solutions to the system is much harder to deal with since the number of equations grow rapidly as $n$ increases.
\end{remark}

\begin{remark}
Vanishing of $\Delta(\mathcal{K}_{1}^{n})$ for $n=1,2,3$ follows from the fact that every matrix in $\mathrm{GL}(3,\Z_{2})$ has integral determinant $\pm 1$. For general $n$, upper bound estimation of $\Delta(\mathcal{K}_{1}^{n})$ is also related to determinant calculation in both $\Z_{2}$ and $\Z$. An upper bound $\frac{(n+1)^{\frac{(n+1)}{2}}}{2^{n}}$ for absolute value of integral determinant of matrix in $\mathrm{GL}(n,\Z_{2})$ was given in \cite{Ha93}. This is a special case of \emph{Hadamard maximum determinant problem} which aims to calculate the maximal determinant of a square matrix with elements restricted in a given set $S$. It should be pointed out that this problem is far from being solved even for the simplest case $S=\{0,1\}$, since whether the bound given above is sharp or not remains unknown except for $n+1$ being power of 2. Computational results in low dimensions ($n\leq 9$) with the aid of computer were listed in \cite{Zi06} and recent theoretical progress can be found in \cite{Sh20}.
\end{remark}

This article is organized as follows. In Section \ref{Preliminaries}, basic definitions and notations are listed. Section \ref{K15} is divided into four parts to prove Theorem \ref{K15 theorem} step by step. Section \ref{K1n} includes the proof of Theorem \ref{K1n theorem} with an illustrative example.

\section{Preliminaries} \label{Preliminaries}
\setcounter{equation}{0}

In the first place, we shall give the formal definition of (real) moment-angle complex and (real) Buchstaber invariant.

\begin{definition} \label{moment-angle complex}
Given a simplicial complex $K$ on $[m]=\{1,\dots,m\}$, we can define the \emph{real moment-angle complex} $\R\mathcal{Z}_{K}$ and the \emph{moment-angle complex} $\mathcal{Z}_{K}$ associated to $K$:
\begin{align*}
\R\mathcal{Z}_{K}=\bigcup\limits_{I\subset K}(D^{1},S^{0})^{I}\subseteq (D^{1})^{m} \qquad \mathcal{Z}_{K}=\bigcup\limits_{I\subset K}(D^{2},S^{1})^{I}\subseteq (D^{2})^{m}
\end{align*}
where $(X,A)^{I}=\{(x_{1},\dots,x_{m})\in X^{m}, x_{i}\in A\ \emph{if}\ i\notin I\}$ for $A\subseteq X$.
\end{definition}

\begin{definition} \label{Buchstaber invariant}
For $\R\mathcal{Z}_{K}$ and $\mathcal{Z}_{K}$ associated to a simplicial complex $K$ on $[m]$: \\
(1) The \emph{real Buchstaber invariant} $s_{\R}(K)$ is the maximal rank of a subgroup $H\subseteq \Z_{2}^{m}$ such that the restricted action $H\curvearrowright \R\mathcal{Z}_{\mathit{K}}$ is free; \\
(2) The \emph{Buchstaber invariant} $s(K)$ is the maximal rank of a toric subgroup $G\subseteq T^{m}$ such that the restricted action $G\curvearrowright \mathcal{Z}_{\mathit{K}}$ is free.
\end{definition}

\begin{example}
Let $K$ be the boundary of a square with vertices labeled as $1,3,2,4$ counterclockwise. By definition, $\mathcal{Z}_{K}=(D^{2}\times S^{1}\cup S^{1}\times D^{2})\times (D^{2}\times S^{1}\cup S^{1}\times D^{2})=S^{3}\times S^{3}$ while $\R\mathcal{Z}_{K}=(D^{1}\times S^{0}\cup S^{0}\times D^{1})\times (D^{1}\times S^{0}\cup S^{0}\times D^{1})=S^{1}\times S^{1}$. Moreover, $s(K)=s_{\R}(K)=2$ follows from the fact $\gamma(K)=\mathrm{dim}K+1=2$.
\end{example}

Secondly, we introduce the (real) universal complex and the corresponding coloring to get an equivalent expression of (real) Buchstaber invariant.

\begin{definition} \label{universal complex}
Let $R_{d}^{n}=\Z_{2}^{n}$ when $d=1$ and $R_{d}^{n}=\Z^{n}$ when $d=2$. The simplicial complex $\mathcal{K}_{d}^{n}$ is defined on the set of primitive vectors in $R_{d}^{n}$ as follow:
$$[\boldsymbol{v_{1}},\cdots,\boldsymbol{v_{k}}] \mathrm{\ is\ a\ simplex\ of\ } \mathcal{K}_{d}^{n} \Longleftrightarrow \{\boldsymbol{v_{1}},\cdots,\boldsymbol{v_{k}}\} \mathrm{\ is\ part\ of\ a\ basis\ of\ } R_{d}^{n}.$$
$\mathcal{K}_{1}^{n}$ is called \emph{real universal complex} while $\mathcal{K}_{2}^{n}$ is called \emph{universal complex}.
\end{definition}

\begin{definition} \label{coloring}
An \emph{$R_{d}^{r}$-coloring} on a simpicial complex $K$ is defined as a non-degenerate simplicial map $\lambda: K \rightarrow \mathcal{K}_{d}^{r}$. The non-degenerate condition means $\lambda$ is an isomorphism on each simplex of $K$.
\end{definition}

Let $r_{\R}(K)$ denote the minimum value of $r$ such that there exists an $R_{1}^{r}$-coloring on $K$. Similarly, $r(K)$ represents the minimum value of $r$ such that there exists an $R_{2}^{r}$-coloring on $K$. Then it follows from definition that $r_{\R}(\mathcal{K}_{1}^{n})=r(\mathcal{K}_{2}^{n})=n$. In addition, equivalent expressions $s_{\R}(K)=m-r_{\R}(K)$ and $s(K)=m-r(K)$ were first proved in \cite{Iz01}. Since there are $2^{n}-1$ primitive vectors in $\mathcal{K}_{1}^{n}$, we have $s_{\R}(\mathcal{K}_{1}^{n})=s(\mathcal{K}_{1}^{n})+\Delta(\mathcal{K}_{1}^{n})=2^{n}-1-n$.

With notations above, we can formally state the Lifting problem as follow:
\begin{problem} \label{lifting problem}
\rm{\cite[Remark 6]{Lv17}} \it{For} any given simplicial complex $K$ and non-degenerate simplicial map $f: K \rightarrow \mathcal{K}_{1}^{s_{\R}(K)}$, does there exist a lifting map $\widetilde{f}: K \rightarrow \mathcal{K}_{2}^{s_{\R}(K)}$ such that the diagram below is commutative: 
$$\xymatrix{
& \mathcal{K}_{2}^{s_{\R}(K)} \ar[d]^{\pi} \\
K \ar[ru]^{\widetilde{f}} \ar[r]^{f} & \mathcal{K}_{1}^{s_{\R}(K)}
}$$
where $\pi: \mathcal{K}_{2}^{s_{\R}(K)} \rightarrow \mathcal{K}_{1}^{s_{\R}(K)}$ is natural modulo 2 projection.
\end{problem}

\section{Lower bound for $s(\mathcal{K}_{1}^{5})$} \label{K15}
\setcounter{equation}{0}

\subsection{Preparation}
Let $e$ represent the identity of $\Z_{2}$ and 1 represent the identity of $\Z$ to avoid confusion. As listed in the table below, we can take a partition $vt(\mathcal{K}_{1}^{5})=V_{1}\sqcup V_{2}\sqcup V_{3}\sqcup V_{4}\sqcup V_{5}$ such that $V_{i}$ consists of primitive vectors with $5-i$ zeros and label 31 elements of $vt(\mathcal{K}_{1}^{5})$ in lexicographic order.
\begin{center}
\begin{tabular}{|c|c|c|c|}
\hline
$V_{1}$ & $V_{2}$ & $V_{3}$ & $V_{4}$ \\ \hline
$\boldsymbol{v_{1}}=(e,0,0,0,0)$ & $\boldsymbol{v_{6}}=(e,e,0,0,0)$ & $\boldsymbol{v_{16}}=(e,e,e,0,0)$ & $\boldsymbol{v_{26}}=(e,e,e,e,0)$ \\
$\boldsymbol{v_{2}}=(0,e,0,0,0)$ & $\boldsymbol{v_{7}}=(e,0,e,0,0)$ & $\boldsymbol{v_{17}}=(e,e,0,e,0)$ & $\boldsymbol{v_{27}}=(e,e,e,0,e)$ \\
$\boldsymbol{v_{3}}=(0,0,e,0,0)$ & $\boldsymbol{v_{8}}=(e,0,0,e,0)$ & $\boldsymbol{v_{18}}=(e,e,0,0,e)$ & $\boldsymbol{v_{28}}=(e,e,0,e,e)$ \\
$\boldsymbol{v_{4}}=(0,0,0,e,0)$ & $\boldsymbol{v_{9}}=(e,0,0,0,e)$ & $\boldsymbol{v_{19}}=(e,0,e,e,0)$ & $\boldsymbol{v_{29}}=(e,0,e,e,e)$ \\
$\boldsymbol{v_{5}}=(0,0,0,0,e)$ & $\boldsymbol{v_{10}}=(0,e,e,0,0)$ & $\boldsymbol{v_{20}}=(e,0,e,0,e)$ & $\boldsymbol{v_{30}}=(0,e,e,e,e)$ \\
& $\boldsymbol{v_{11}}=(0,e,0,e,0)$ & $\boldsymbol{v_{21}}=(e,0,0,e,e)$ & \\
& $\boldsymbol{v_{12}}=(0,e,0,0,e)$ & $\boldsymbol{v_{22}}=(0,e,e,e,0)$ & \\
& $\boldsymbol{v_{13}}=(0,0,e,e,0)$ & $\boldsymbol{v_{23}}=(0,e,e,0,e)$ & \\
& $\boldsymbol{v_{14}}=(0,0,e,0,e)$ & $\boldsymbol{v_{24}}=(0,e,0,e,e)$ & \\
& $\boldsymbol{v_{15}}=(0,0,0,e,e)$ & $\boldsymbol{v_{25}}=(0,0,e,e,e)$ & \\ 
\hline
\multicolumn{4}{|c|}{$V_{5}: \boldsymbol{v_{31}}=(e,e,e,e,e)$} \\
\hline
\end{tabular}
\end{center}

Within the rest of this article, we assume all vectors are understood as column vectors and $\mathrm{det}_{\Z_{2}}$(-) represents determinant taken in $\Z_{2}$ while $\mathrm{det}$(-) represents determinant taken in $\Z$. The statement of Theorem 1 is equivalent to $r(\mathcal{K}_{1}^{5})\leq 7$, i.e., there exists a vertex map $\Lambda: vt(\mathcal{K}_{1}^{5}) \rightarrow vt(\mathcal{K}_{2}^{7})$ which induces a non-degenerate simplicial map from $\mathcal{K}_{1}^{5}$ to $\mathcal{K}_{2}^{7}$. By the restriction of non-degenerate condition, it remains to verify 
\begin{align*}
& \forall\ \{\boldsymbol{v_{i_{1}}},\boldsymbol{v_{i_{2}}},\boldsymbol{v_{i_{3}}},\boldsymbol{v_{i_{4}}},\boldsymbol{v_{i_{5}}}\} \subseteq vt(\mathcal{K}_{1}^{5})\ \emph{satisfying}\ \mathrm{det}_{\Z_{2}}(\boldsymbol{v_{i_{1}}},\boldsymbol{v_{i_{2}}},\boldsymbol{v_{i_{3}}},\boldsymbol{v_{i_{4}}},\boldsymbol{v_{i_{5}}})=e, \\
& \exists\ \boldsymbol{\alpha}=(a_{1},\dots,a_{7})^{\mathrm{T}} \in \Z^{7}\ \emph{and}\ \boldsymbol{\beta}=(b_{1},\dots,b_{7})^{\mathrm{T}} \in \Z^{7}\ \emph{with the property}: \tag{$\bigstar$} \\
& \mathrm{det}(\Lambda(\boldsymbol{v_{i_{1}}}),\Lambda(\boldsymbol{v_{i_{2}}}),\Lambda(\boldsymbol{v_{i_{3}}}),\Lambda(\boldsymbol{v_{i_{4}}}),\Lambda(\boldsymbol{v_{i_{5}}}),\boldsymbol{\alpha},\boldsymbol{\beta})=\pm 1.
\end{align*}

Indeed, it is even possible to construct $\Lambda$ with additional restrictions:
\begin{align*}
p_{j}\circ \Lambda=id_{j} \qquad 1 \leq j \leq 5
\end{align*}
where $p_{j}$ is the projection onto the $j^{th}$ coordinate of $\Z^{7}$ and $id_{j}$ is the identity map of the $j^{th}$ coordinate. For the $6^{th}$ and $7^{th}$ coordinate, write $\phi=p_{6}\circ \Lambda$, $\psi=p_{7}\circ \Lambda$ and suppose for each $i\in \{1,\dots,31\}$, $\phi(\boldsymbol{v_{i}})=s_{i}\in \Z$, $\psi(\boldsymbol{v_{i}})=t_{i}\in \Z$. Then write $\Phi(\boldsymbol{v_{i}})=\left(\begin{smallmatrix} \boldsymbol{v_{i}} \\ s_{i} \end{smallmatrix} \right) \in \Z^{6}$, $\Psi(\boldsymbol{v_{i}})=\left(\begin{smallmatrix} \boldsymbol{v_{i}} \\ t_{i} \end{smallmatrix} \right) \in \Z^{6}$ and $\boldsymbol{\alpha'}=(a_{1},\dots,a_{5},a_{6})^{\mathrm{T}}$, $\boldsymbol{\beta'}=(b_{1},\dots,b_{5},b_{7})^{\mathrm{T}}$. Furthermore, let $A=(\boldsymbol{v_{i_{1}}},\boldsymbol{v_{i_{2}}},\boldsymbol{v_{i_{3}}},\boldsymbol{v_{i_{4}}},\boldsymbol{v_{i_{5}}})$ and $(A)_{j}$ denote the matrix $A$ with $j^{th}$ row replaced by $(s_{i_{1}},s_{i_{2}},s_{i_{3}},s_{i_{4}},s_{i_{5}})$,  $(A)^{j}$ denote the matrix $A$ with $j^{th}$ row replaced by $(t_{i_{1}},t_{i_{2}},t_{i_{3}},t_{i_{4}},t_{i_{5}})$. Then by basic linear algebra:
\begin{align*}
& \mathrm{det}(\Phi(\boldsymbol{v_{i_{1}}}),\Phi(\boldsymbol{v_{i_{2}}}),\Phi(\boldsymbol{v_{i_{3}}}),\Phi(\boldsymbol{v_{i_{4}}}),\Phi(\boldsymbol{v_{i_{5}}}),\boldsymbol{\alpha'}) \\
=\ & a_{6}\mathrm{det}A-a_{1}\mathrm{det}(A)_{1}-a_{2}\mathrm{det}(A)_{2}-a_{3}\mathrm{det}(A)_{3}-a_{4}\mathrm{det}(A)_{4}-a_{5}\mathrm{det}(A)_{5}; \\
& \mathrm{det}(\Psi(\boldsymbol{v_{i_{1}}}),\Psi(\boldsymbol{v_{i_{2}}}),\Psi(\boldsymbol{v_{i_{3}}}),\Psi(\boldsymbol{v_{i_{4}}}),\Psi(\boldsymbol{v_{i_{5}}}),\boldsymbol{\beta'}) \\
=\ & b_{7}\mathrm{det}A-b_{1}\mathrm{det}(A)^{1}-b_{2}\mathrm{det}(A)^{2}-b_{3}\mathrm{det}(A)^{3}-b_{4}\mathrm{det}(A)^{4}-b_{5}\mathrm{det}(A)^{5}.
\end{align*}
By Chinese Remainder Theorem, the existence of $\boldsymbol{\alpha'}$ for the first determinant being $\pm 1$ is equivalent to:
\begin{align*}
g.c.d.(\mathrm{det}A,\mathrm{det}(A)_{1},\mathrm{det}(A)_{2},\mathrm{det}(A)_{3},\mathrm{det}(A)_{4},\mathrm{det}(A)_{5})=1. \tag{$\star 1$}
\end{align*}
Similarly, the existence of $\boldsymbol{\beta'}$ for the second determinant being $\pm 1$ is equivalent to 
\begin{align*}
g.c.d.(\mathrm{det}A,\mathrm{det}(A)^{1},\mathrm{det}(A)^{2},\mathrm{det}(A)^{3},\mathrm{det}(A)^{4},\mathrm{det}(A)^{5})=1. \tag{$\star 2$}
\end{align*}
If $(\star 1)$ or $(\star 2)$ holds, then taking $\boldsymbol{\beta}=\boldsymbol{e_{7}}$ or $\boldsymbol{\alpha}=\boldsymbol{e_{6}}$ as standard basis of $\Z^{7}$ yields the validity of $(\bigstar)$. Specifically, there is nothing to verify when $|\mathrm{det}A|=1$ itself. On the other hand, it follows from the upper bound given in \cite{Ha93} that $|\mathrm{det}A|\leq 3$ if $A\in \mathrm{GL}(4,\Z_{2})$ and $|\mathrm{det}A|\leq 5$ if $A\in \mathrm{GL}(5,\Z_{2})$. 

In order to discuss 5-dimensional case, two lemmas in 4-dimensional case are needed.

\begin{lemma} \label{rank=4 det=3}
\rm{\cite[Lemma 3.2]{Sun17}} \it{For} $M\in \mathrm{GL}(4,\Z_{2})$ regarded as an integral matrix, $\mathrm{det}M=\pm 3$ induces: 
$$
M=\begin{pmatrix} 1 & 1 & 1 & 0 \\ 1 & 1 & 0 & 1 \\ 1 & 0 & 1 & 1 \\ 0 & 1 & 1 & 1 \end{pmatrix} 
or \begin{pmatrix} 1 & 1 & 1 & 0 \\ 1 & 0 & 0 & 1 \\ 0 & 1 & 0 & 1 \\ 0 & 0 & 1 & 1 \end{pmatrix}
$$
after necessary permutation of rows and columns.
\end{lemma}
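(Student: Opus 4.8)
The plan is to treat this as a finite classification of $\{0,1\}$-matrices of order $4$ with $|\det M|=3$, up to permutations of rows and columns, exploiting two elementary facts: the maximal determinant of a $3\times 3$ matrix with $\{0,1\}$ entries is $2$, and row/column permutations preserve the multisets of row weights and column weights (where the \emph{weight} of a row or column is its number of nonzero entries).

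First I would eliminate the extreme weights. If some row (resp.\ column) has weight $1$, expanding $\det M$ along it yields $\det M=\pm\det M'$ for a $3\times 3$ submatrix $M'$ with $\{0,1\}$ entries, so $|\det M|\le 2$, a contradiction. If some row has weight $4$, then subtracting that all-ones row from the remaining three (which preserves the determinant) and clearing the first column by a single column operation reduces $\det M$ to $\pm\det E$ for a $3\times 3$ matrix $E$ with entries in $\{-1,0,1\}$, each row of which is either a $\{0,1\}$-vector or the negative of one. Negating the latter rows changes $\det E$ only by a sign and turns $E$ into a $3\times 3$ $\{0,1\}$-matrix, whence $|\det M|=|\det E|\le 2$, again a contradiction. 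By transposition the same holds for columns, so every row and every column of $M$ has weight $2$ or $3$.

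Next, since $|\det M|=3$ is odd, $M\in GL(4,\Z_{2})$ and its rows are distinct; in particular no case below may contain a repeated row. Writing $a$ for the number of weight-$3$ rows, the total number of ones is $a+8$, and the same count from the columns forces the number of weight-$3$ columns to equal $a$ as well, so it suffices to analyze the five cases $a\in\{0,1,2,3,4\}$. For $a=4$ every row is the complement of a distinct standard basis vector, so $M=J-P$ for a permutation matrix $P$; one checks $|\det(J-P)|=3$ for every $P$, and since $JQ=J$ for any permutation matrix $Q$, all such matrices form a single orbit under column permutations, namely that of the first listed matrix. For $a=1$ a short normalization of the unique weight-$3$ row and the three weight-$2$ rows forces $M$, up to permutation, into the second listed matrix. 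The remaining cases $a=0,2,3$ must be shown to give $|\det M|\le 2$ (or a repeated row), hence to produce no new examples.

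The main obstacle is precisely this last case analysis: establishing completeness, i.e.\ that after the weight restriction every admissible configuration has been accounted for, and then verifying within each case that the determinant is either too small or else pins $M$ down to exactly one of the two normal forms. The delicate points are the orbit-uniqueness for $a=4$ (all weight-$3$ matrices are complements of permutation matrices, hence mutually equivalent) and for $a=1$, together with the systematic exclusion of the balanced patterns $a=2,3$ and the all-weight-$2$ pattern $a=0$, where the distinctness of the rows and a handful of determinant evaluations must be used to rule out the value $\pm 3$.
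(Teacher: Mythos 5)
A preliminary point: the paper does not prove this lemma at all --- it is imported verbatim as Lemma 3.2 of \cite{Sun17} --- so the only in-paper point of comparison is the proof of Lemma 2, the companion classification for determinant $\pm 2$, which partitions the columns by their number of zeros ($W_{1},\dots,W_{4}$) and reduces to the unique $3\times 3$ binary matrix of determinant $\pm 2$ via equivalence moves and column subtractions. Your skeleton is in the same spirit, and the steps you actually carry out are correct: the minor-expansion argument excluding weight-$1$ rows and columns, the reduction excluding weight-$4$ rows (after subtracting the all-ones row, each remaining row of the reduced $3\times 3$ matrix is $\pm$ a $\{0,1\}$-vector, so $|\mathrm{det}|\leq 2$), the double count showing the number of weight-$3$ columns equals the number $a$ of weight-$3$ rows, and the complete treatment of $a=4$: distinctness of rows gives $M=J-P$ with $P$ a permutation matrix, $J-P=P(J-I)$ gives $\mathrm{det}(J-P)=\pm\mathrm{det}(J-I)=\pm 3$, and $(J-P)Q=J-PQ$ gives a single orbit. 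This counting frame is arguably tidier than the column-class analysis used for Lemma 2.

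However, as a proof the proposal has a genuine gap, which you flag yourself as ``the main obstacle'': the cases $a\in\{0,1,2,3\}$, which contain essentially all of the content of the lemma, are asserted rather than argued. Nothing is offered for the uniqueness of the normal form when $a=1$ (one must, for instance, split on whether the weight-$3$ row and the weight-$3$ column meet in a $0$ or a $1$, and eliminate the latter configuration, where the determinant turns out to be $\pm 1$ or $\pm 2$), nor for the exclusion of $a=0,2,3$; writing ``must be shown'' is not showing it. The missing checks are short, for what it is worth: for $a=0$ every row has weight $2$, so the four columns sum to the zero vector mod $2$, contradicting $M\in GL(4,\Z_{2})$ outright; for $a=3$ the three weight-$3$ rows are distinct complements $\mathbf{1}-e_{j}$ of basis vectors, the exclusion of weight-$4$ columns forces the weight-$2$ row to be supported inside $\{j_{1},j_{2},j_{3}\}$, and subtracting the appropriate weight-$3$ row from it yields a row equal to $-e_{t}$, whence $|\mathrm{det}M|=2$ in every configuration; $a=2$ is a similarly brief enumeration using distinctness of rows. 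Until these cases and the $a=1$ normalization are written out, what you have is a correct and workable plan, not yet a proof.
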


\begin{lemma} \label{rank=4 det=2}
For $M\in Mat(4,\Z_{2})$ regarded as an integral matrix, if $\mathrm{det}M=\pm 2$, then $M$ equals to one of the following types after necessary permutation of rows and columns: 
\begin{align*}
& (1) \quad \begin{pmatrix} 1 & x_{1} & x_{2} & x_{3} \\ 0 & 1 & 1 & 0 \\ 0 & 1 & 0 & 1 \\ 0 & 0 & 1 & 1 \end{pmatrix} \\
& (2) \quad \begin{pmatrix} 1 & 1 & 1 & 1 \\ 1 & 1 & 0 & 0 \\ 1 & 0 & 1 & 0 \\ 1 & 0 & 0 & 1 \end{pmatrix}  \quad
\begin{pmatrix} 1 & 1 & 1 & 0 \\ 1 & 1 & 0 & 1 \\ 1 & 0 & 1 & 1 \\ 1 & 0 & 0 & 0 \end{pmatrix} \\
& (3) \quad \begin{pmatrix} 1 & 1 & 0 & 0 \\ 1 & 0 & 1 & 0 \\ 0 & 1 & 1 & 1 \\ 0 & 0 & 0 & 1 \end{pmatrix}  \quad
\begin{pmatrix} 1 & 0 & y_{1} & y_{2} \\ 1 & 1 & 1 & 0 \\ 1 & 1 & 0 & 1 \\ 0 & 0 & 1 & 1 \end{pmatrix}
\end{align*}
where $x_{1},x_{2},x_{3}$ and $y_{1},y_{2}$ belong to $\{0,1\}$.
\end{lemma}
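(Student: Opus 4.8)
The plan is to regard $M$ as an integral matrix with entries in $\{0,1\}$ satisfying $\det M=\pm2$, and to classify such matrices up to permutation of rows and columns, freely using that $\det M=\det M^{\mathrm T}$ so that any statement proved for columns holds verbatim for rows. The first step is purely arithmetic: since $|\det M|=2$ is square-free, the Smith normal form of $M$ over $\Z$ must be $\mathrm{diag}(1,1,1,2)$, so $M$ has rank exactly $3$ when reduced modulo $2$. Hence the kernel and the left kernel of $M$ over $\Z_2$ are each one-dimensional, i.e.\ there is exactly one linear dependence among the columns and one among the rows. Writing $S$ and $T$ for their supports, I would record that $|S|,|T|\ge 3$: a support of size $1$ would give a zero column (or row), and a support of size $2$ would force two columns equal modulo $2$, hence literally equal as $0/1$ vectors; either way $\det M=0$, a contradiction.

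The engine of the argument is the elementary observation that, up to permutation of rows and columns, the only $0/1$ matrix of size $3$ with $|\det|=2$ is $P:=\bigl(\begin{smallmatrix}1&1&0\\1&0&1\\0&1&1\end{smallmatrix}\bigr)$, every other $0/1$ matrix of that size having determinant in $\{-1,0,1\}$. This follows from the same rank-$2$-mod-$2$ analysis: the three columns must sum to $0$ modulo $2$, so each row is one of $000,110,101,011$, and a nonzero determinant forces the three rows to be exactly $110,101,011$. I would then split the proof according to whether or not $M$ possesses a $3\times3$ submatrix of determinant $\pm2$.

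In the first case such a submatrix exists, and after permutations I may place it as the block $P$ in the lower-right corner, so that $M=\bigl(\begin{smallmatrix}a&\mathbf b^{\mathrm T}\\ \mathbf c&P\end{smallmatrix}\bigr)$ with $a\in\{0,1\}$ and $\mathbf b,\mathbf c\in\{0,1\}^3$. The bordered-determinant identity gives $\det M=-2a-\mathbf b^{\mathrm T}\,\mathrm{adj}(P)\,\mathbf c$, where $\mathrm{adj}(P)$ is the explicit symmetric $\pm1$ matrix $\bigl(\begin{smallmatrix}-1&-1&1\\-1&1&-1\\1&-1&-1\end{smallmatrix}\bigr)$. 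Imposing $\det M=\pm2$ is then a finite check over the $2^{7}$ choices of $(a,\mathbf b,\mathbf c)$: when $\mathbf c=\mathbf 0$ the equation forces $a=1$ with $\mathbf b$ arbitrary, which is exactly type $(1)$; running through the remaining choices and normalising by permutations produces the matrices of type $(3)$ together with the second matrix listed under $(2)$, the surviving free entries $x_i,y_i$ being precisely the border coordinates that drop out of the formula.

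In the complementary case $M$ has no $3\times3$ minor equal to $\pm2$. Here I would first show $|S|=|T|=4$: if instead $|S|=3$, the $4\times3$ block formed by the three dependent columns has each row in $\{000,110,101,011\}$ and can attain $\Z$-rank $3$ only if $110,101,011$ all occur as rows, exhibiting a copy of $P$ and so a $\pm2$ minor, contrary to assumption; the same argument applied to $M^{\mathrm T}$ handles $T$. Thus every row and every column has even weight, the four columns lie among the seven nonzero even-weight vectors of $\Z_2^4$ and sum to zero modulo $2$, and a short enumeration of the rank-$3$ possibilities with $|\det|=2$ singles out the first matrix of type $(2)$. The main obstacle throughout is the bookkeeping in these finite checks: because the supports $S,T$ and the chosen block $P$ are not canonical, a single matrix can be reached through several choices, so one must verify both that each listed family is genuinely realised and that every solution collapses, under some permutation of rows and columns, onto exactly one entry of the list — in particular that the parameterised solution families do not secretly produce a matrix outside types $(1)$, $(2)$ and $(3)$.
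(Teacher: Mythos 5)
Your proposal is correct, and it takes a genuinely different route from the paper's proof. The paper argues by column weights: it partitions $vt(\mathcal{K}_1^4)$ into classes $W_1,\dots,W_4$ according to the number of zeros and splits on whether $M$ has a column in $W_1$ (yielding type (1)), a column in $W_4$ (yielding type (2), via column-subtraction tricks), or all columns in $W_2\sqcup W_3$ (yielding type (3), via the containment dichotomy), leaning on Lemma 1 for the last case; crucially, it merely \emph{asserts} that $P=\left(\begin{smallmatrix}1&1&0\\1&0&1\\0&1&1\end{smallmatrix}\right)$ is the unique $3\times 3$ binary matrix with $|\mathrm{det}|=2$ up to equivalence. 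You instead \emph{prove} that uniqueness via the Smith-form/mod-2-corank argument, and you organize the whole classification around whether $M$ contains a $\pm 2$ minor: the bordered-determinant identity $\mathrm{det}\,M=-2a-\mathbf{b}^{\mathrm{T}}\mathrm{adj}(P)\,\mathbf{c}$ turns the affirmative case into a transparent $2^7$-point search (your allocation is accurate: types (1), (3) and the second type-(2) matrix all contain a copy of $P$ as a minor, e.g.\ in rows and columns $\{2,3,4\}$ after suitable permutation, and for the second type-(3) matrix one computes $\mathrm{adj}(P)\mathbf{c}=(-2,0,0)^{\mathrm{T}}$ so the entries $y_1,y_2$ indeed drop out of the formula), while the even-weight enumeration in the negative case correctly isolates the first type-(2) matrix, which is in fact the only listed matrix all of whose $3\times 3$ minors are $\pm 1$; your exclusion of the four-columns-of-weight-two configuration is right, since such columns form a $4$-cycle and give determinant $0$. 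What the paper's route buys is brevity, at the cost of citing Lemma 1 and leaving the key $3\times 3$ fact and several determinant computations unproved; what yours buys is self-containedness and a structural explanation (corank $1$ over $\Z_2$, forced even row and column weights), plus the incidental observation that $\mathrm{det}\,M=\pm 2$ forces $M\notin GL(4,\Z_2)$, so your silent rereading of the hypothesis as ``$(0,1)$-matrix'' is the only one under which the lemma is non-vacuous (the paper applies it to minors of $GL(5,\Z_2)$ matrices, which need not be invertible mod $2$). The one caveat, which you rightly flag yourself, is that the $128$-case search and the final de-duplication up to row and column permutation are described rather than executed; they are finite and routine, but the lemma's content lives precisely in that bookkeeping, so a complete write-up should tabulate the solutions of $\mathbf{b}^{\mathrm{T}}\mathrm{adj}(P)\mathbf{c}\in\{0,-4\}$ (for $a=1$) and $\mathbf{b}^{\mathrm{T}}\mathrm{adj}(P)\mathbf{c}=\mp 2$ (for $a=0$) and exhibit the permutations collapsing them onto the list.
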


\begin{proof}
Similar to 5-dimensional case, we can take a partition $vt(\mathcal{K}_{1}^{4})=W_{1}\sqcup W_{2}\sqcup W_{3}\sqcup W_{4}$ such that $W_{i}$ consists of primitive vectors with $4-i$ zeros and label 15 elements of $vt(\mathcal{K}_{1}^{4})$ in lexicographic order. Define two matrices as \emph{equivalent} if they differ from each other by permutation of rows and columns. We analyze the columns of $M=(\boldsymbol{\lambda_{1}},\boldsymbol{\lambda_{2}},\boldsymbol{\lambda_{3}},\boldsymbol{\lambda_{4}})$ in the sequel: 

\vspace{0.5cm}
\noindent $\emph{Case 1.}$ There exists $\boldsymbol{\lambda_{i}} \in W_{1}$, then we can assume $\boldsymbol{\lambda_{1}}=(1,0,0,0)^{\mathrm{T}}$ by equivalence. In this way, $M$ belongs to type (1) since $\left(\begin{smallmatrix} 1 & 1 & 0\\ 1 & 0 & 1\\ 0 & 1 & 1 \end{smallmatrix}\right)$ is the only $3\times 3$ binary matrix with absolute value of determinant equal to 2 up to equivalence. 

\vspace{0.5cm}
\noindent $\emph{Case 2.}$ There exists $\boldsymbol{\lambda_{i}} \in W_{4}$, then we can assume $\boldsymbol{\lambda_{1}}=(1,1,1,1)^{\mathrm{T}}$ by equivalence. Note that in this case the other column vectors can not belong to $W_{1}$ since that is contradictory to $\emph{Case 1}$. Moreover, if $\boldsymbol{\lambda_{2}} \in W_{3}$, then substract $\boldsymbol{\lambda_{2}}$ from $\boldsymbol{\lambda_{1}}$ will lead to contradiction by the same reason. Therefore, all three other vectors belong to $W_{2}$ and type (2) is obtained after taking equivalence. 

\vspace{0.5cm}
\noindent $\emph{Case 3.}$ According to Lemma 1, one column vector belongs to $W_{2}$ and others belong to $W_{2}\sqcup W_{3}$ in the remaining case. We call $\boldsymbol{\lambda_{i}}$ is $\emph{contained}$ in $\boldsymbol{\lambda_{j}}$ if $\lambda_{ik}\leq \lambda_{jk}$ is valid for every $k$. If this happens, then we can assume $\boldsymbol{\lambda_{1}}=(1,1,1,0)^{\mathrm{T}}$ and $\boldsymbol{\lambda_{2}}=(0,1,1,0)^{\mathrm{T}}$ up to equivalence and direct computation gives out the second matrix in type (3). Otherwise, every column vectors must belong to $W_{2}$, which leads to the first matrix of type (3). 
\end{proof}

\subsection{Elements in $\mathrm{GL}(5,\Z_{2})$ with integral determinant $\pm 5$}
\begin{claim} \label{rank=5 det=5 A}
$\forall\ M=(\boldsymbol{\lambda_{1}},\boldsymbol{\lambda_{2}},\boldsymbol{\lambda_{3}},\boldsymbol{\lambda_{4}},\boldsymbol{\lambda_{5}})\in \mathrm{GL}(5,\Z_{2})$ with integral determinant $\pm 5$, there is a $4\times 4$ minor $M_{ij}$ corresponding to element $m_{ij}=1$ such that $\mathrm{det}M_{ij}=\pm 3$.
\end{claim}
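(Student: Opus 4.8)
The plan is to argue by contradiction: suppose that every $4\times 4$ minor $M_{ij}$ with $m_{ij}=1$ satisfies $|\det M_{ij}|\le 2$, and derive a contradiction with $\det M=\pm 5$. The first step is to sort the relevant minors according to their reduction modulo $2$. Since reducing the integral determinant modulo $2$ yields $\det_{\Z_{2}}M_{ij}$, every $4\times 4$ binary minor is of exactly one of two kinds: the \emph{$\Z_{2}$-singular} ones, for which $\det_{\Z_{2}}M_{ij}=0$ so that the integral determinant is even, forcing $\det M_{ij}\in\{0,\pm 2\}$ by the bound $|\det M_{ij}|\le 3$ quoted from \cite{Ha93}; and the \emph{$\Z_{2}$-nonsingular} ones, for which $\det_{\Z_{2}}M_{ij}=1$ and hence $\det M_{ij}\in\{\pm 1,\pm 3\}$. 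Under the contradiction hypothesis the value $\pm 3$ never occurs at an entry equal to $1$, so every $\Z_{2}$-nonsingular minor among those with $m_{ij}=1$ contributes exactly $\pm 1$.

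Next I would extract two constraints from the cofactor expansion $\det M=\sum_{i}(-1)^{i+j}m_{ij}\det M_{ij}$ along an arbitrary column $j$. Reading this identity modulo $2$ and using $\det_{\Z_{2}}M=1$ shows that the number of $\Z_{2}$-nonsingular minors among $\{M_{ij}:m_{ij}=1\}$ is odd, so in each column at least one contributing minor is $\Z_{2}$-nonsingular (and therefore contributes $\pm 1$). Reading the same identity over $\Z$ and taking absolute values gives $5=|\det M|\le\sum_{i:\,m_{ij}=1}|\det M_{ij}|\le 2k_{j}$, where $k_{j}$ denotes the number of $1$'s in column $j$; hence $k_{j}\ge 3$. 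Applying the expansion along rows in the same way forces every row sum to be $\ge 3$ as well, so $M$ carries at least $15$ ones.

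The final and hardest step is to rule out the surviving configurations. For each column the admissible multiset of contributing minor-determinants is drawn from $\{0,\pm 1,\pm 2\}$, with an odd number of $\pm 1$'s, summing with the cofactor signs to $\pm 5$; together with $k_{j}\ge 3$ this leaves only a few patterns, such as one $\pm 1$ alongside two $\pm 2$'s. Two such $\pm 2$ minors arise by deleting two different rows from the same $5\times 4$ submatrix (the columns other than $j$), so Lemma 2 pins down the normal form of each, and these forms must be mutually compatible as well as compatible with all row and column sums being $\ge 3$. The main obstacle is precisely this overlap bookkeeping: one must show that no assignment of Lemma 2 normal forms to the several interlocking minors of a single determinant-$5$ matrix is self-consistent unless some minor at an entry equal to $1$ actually has determinant $\pm 3$, contradicting the hypothesis. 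Equivalently, the contradiction can be phrased as a characterization of the extremal matrices, showing that every $M\in GL(5,\Z_{2})$ with $\det M=\pm 5$ is equivalent to an essentially unique normal form in which a $\pm 3$ minor at a $1$-entry is visible. Either route reduces to the same finite, Lemma 2--driven case check, which is where essentially all of the work resides.
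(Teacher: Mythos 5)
Your reduction is correct as far as it goes, and it is essentially the same opening as the paper's: your parity split of the minors and the estimate $5\le 2k_{j}$ amount to the paper's first observation that a row or column with more than two zeros already produces a $\pm 3$ minor at a $1$-entry (a column with a single $1$ is impossible outright, since it would need a minor of determinant $\pm 5>3$). But your proof stops exactly where the claim's content begins: you state that the Lemma~2 ``overlap bookkeeping'' is ``where essentially all of the work resides'' and then never perform it, so no contradiction is ever derived. Moreover, the case analysis you sketch is not correctly scoped. You only discuss the pattern of one $\pm 1$ alongside two $\pm 2$'s, which is indeed the unique option for a column with exactly three ones, but your sieve leaves columns with four or five ones alive, and for those your parity and sum constraints admit further patterns (such as $1+1+1+2$, $2+2+2-1$, or $1+1+1+1+1$) that the proposal never addresses.

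The paper handles both issues with one pivot and a short explicit check. It takes $\boldsymbol{\lambda_{5}}$ to be the column with the \emph{most} zeros. If $\boldsymbol{\lambda_{5}}\in V_{4}$, then every column lies in $V_{4}\sqcup V_{5}$, so $M$ is either all five elements of $V_{4}$ or four of them plus $\boldsymbol{v_{31}}$, and neither has determinant $\pm 5$ --- this disposes of all the dense configurations at once, which your framework does not. If $\boldsymbol{\lambda_{5}}\in V_{3}$, the minors at its three $1$-entries must be $\{1,2,2\}$ up to sign, and combining Lemma~2 with the no-sparse-row-or-column restriction pins $M$ down to a single parametrized matrix; the paper then kills this matrix by hand: $n_{1}=1$ is excluded by subtracting $\boldsymbol{\lambda_{2}}$ from $\boldsymbol{\lambda_{1}}$, $l_{1}=0$ is excluded because rows $2$, $3$, $4$ then sum to $(2,2,2,2,2)$, forcing the rows to be dependent mod $2$, and in the surviving shape the same column subtraction exhibits a $\pm 3$ minor at $M_{11}$ or $M_{51}$. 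These subtraction tricks are the actual engine of the contradiction, and they are absent from your proposal. Note also that your suggested alternative --- first classifying all $M$ with $\det M=\pm 5$ --- is not a shortcut here: the paper's classification (Claim~2) is itself proved \emph{using} Claim~1, so that route would require redoing the classification independently, which is at least as much work as the finite check you deferred.
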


\begin{proof}
If there exists one row or one column with more than two zeros, then expansion by minors on that row or column leads to a minor corrsponding to element 1 with determinant $\pm 3$. Otherwise, after certain permutation of columns, it can be assumed that $\boldsymbol{\lambda_{5}}\in V_{3}\sqcup V_{4}$ is the column with most zeros. If $\boldsymbol{\lambda_{5}}\in V_{4}$, then $M$ consists of all five elements in $V_{4}$ or any four elements in $V_{4}$ plus $\boldsymbol{v_{31}}$, neither of which has determinant $\pm 5$. Now suppose $\boldsymbol{\lambda_{5}}\in V_{3}$ and all minors of $M$ corresponding to element 1 do not equal to $\pm 3$, then expansion on $\boldsymbol{\lambda_{5}}$ shows that the minors corresponding to element 1 in $\boldsymbol{\lambda_{5}}$ are $(1,2,2)$ up to equivalence. Combine Lemma \ref{rank=4 det=2} and restrictions above, the only possible choice for $M$ is: 
$$\begin{pmatrix}
1 & 0 & y_{1} & y_{2} & l_{1} \\
1 & 1 & 1 & 0 & l_{2} \\
1 & 1 & 0 & 1 & l_{3} \\
0 & 0 & 1 & 1 & 1 \\
n_{1} & 1 & n_{2} & n_{3} & 1
\end{pmatrix}$$
with $y_{1},y_{2},l_{1},l_{2},l_{3},n_{1},n_{2},n_{3}\in \{0,1\}$. \\
Note that if $n_{1}=1$, then subtract $\boldsymbol{\lambda_{2}}$ from $\boldsymbol{\lambda_{1}}$ leads to contradiction. In addition, if $l_{1}=0$, then the sum of row 2,3 and 4 is (2,2,2,2,2), leading to contradiction again. Therefore, $M$ must have the following type:
$$
\begin{pmatrix}
1 & 0 & y_{1} & y_{2} & 1 \\
1 & 1 & 1 & 0 & 0 \\
1 & 1 & 0 & 1 & 0 \\
0 & 0 & 1 & 1 & 1 \\
0 & 1 & n_{2} & n_{3} & 1
\end{pmatrix}
$$
with $y_{1},y_{2},n_{2},n_{3}\in \{0,1\}$. \\
Subtracting $\boldsymbol{\lambda_{2}}$ from $\boldsymbol{\lambda_{1}}$ shows either minor $M_{11}$ or minor $M_{51}$ equals to $\pm 3$, resulting in contradiction anyway. 
\end{proof}

With the help of Claim \ref{rank=5 det=5 A} above, 5-dimensional binary matrices with integral determinant $\pm 5$ can be constructed by expanding from 4-dimensional binary matrices with integral determinant $\pm 3$. As a matter of fact, there are only three equivalent classes in total.

\begin{claim} \label{rank=5 det=5 B}
If $M=(\boldsymbol{\lambda_{1}},\boldsymbol{\lambda_{2}},\boldsymbol{\lambda_{3}},\boldsymbol{\lambda_{4}},\boldsymbol{\lambda_{5}})\in \mathrm{GL}(5,\Z_{2})$ has integral determinant $\pm 5$, then $M$ is equivalent to one of the following three matrices:
\begin{equation*}
M_{1}=\begin{pmatrix} 1 & 1 & 1 & 0 & 0 \\ 1 & 1 & 0 & 1 & 0 \\ 1 & 0 & 1 & 1 & 0 \\ 0 & 1 & 1 & 1 & 1 \\ 1 & 0 & 0 & 0 & 1 \end{pmatrix} \qquad
M_{2}=\begin{pmatrix} 1 & 1 & 1 & 0 & 0 \\ 1 & 0 & 0 & 1 & 0 \\ 0 & 1 & 0 & 1 & 0 \\ 0 & 0 & 1 & 1 & 1 \\ 1 & 1 & 0 & 0 & 1 \end{pmatrix} \qquad
M_{3}=\begin{pmatrix} 1 & 1 & 1 & 0 & 0 \\ 1 & 1 & 0 & 1 & 0 \\ 1 & 0 & 1 & 1 & 1 \\ 0 & 1 & 1 & 1 & 1 \\ 1 & 1 & 0 & 0 & 1 \end{pmatrix}. 
\end{equation*}
\end{claim}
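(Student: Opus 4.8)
The plan is to realize every such $M$ by \emph{bordering} the $4\times4$ matrices of Lemma 1, exactly along the lines suggested in the sentence preceding the statement. By Claim 1 there is a position $(i,j)$ with $m_{ij}=1$ whose complementary $(4\times4)$ minor has determinant $\pm3$. Permuting row $i$ into the fifth row and column $j$ into the fifth column is an equivalence, so I may assume $m_{55}=1$ and that the upper-left $4\times4$ block $N$ satisfies $\det N=\pm3$. Applying Lemma 1 to $N$ and permuting the first four rows and first four columns accordingly — this is still an equivalence of the whole matrix, which only reshuffles the entries of the new fifth column and fifth row — I may take $N$ to be literally one of the two canonical forms of Lemma 1, call them $N_{1}$ and $N_{2}$, where a direct computation gives $\det N_{1}=-3$ and $\det N_{2}=3$.

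Writing $M=\left(\begin{smallmatrix} N & \boldsymbol{c}\\ \boldsymbol{r}^{\mathrm T} & 1\end{smallmatrix}\right)$ with $\boldsymbol{c}=(c_{1},\dots,c_{4})^{\mathrm T}$ and $\boldsymbol{r}=(r_{1},\dots,r_{4})^{\mathrm T}$, all $c_{k},r_{k}\in\{0,1\}$, I would expand the determinant along the last column and last row (equivalently, use the Schur complement of the invertible block $N$) to obtain the clean identity $\det M=\det N-\boldsymbol{r}^{\mathrm T}\,\mathrm{adj}(N)\,\boldsymbol{c}$. Since $\det M=\pm5$ is odd, the hypothesis $M\in GL(5,\Z_{2})$ becomes automatic, so the whole problem collapses to the single bilinear Diophantine constraint $\boldsymbol{r}^{\mathrm T}\,\mathrm{adj}(N)\,\boldsymbol{c}=\det N\mp5$; concretely this reads $\boldsymbol{r}^{\mathrm T}\,\mathrm{adj}(N_{1})\,\boldsymbol{c}\in\{-8,2\}$ when $N=N_{1}$ and $\boldsymbol{r}^{\mathrm T}\,\mathrm{adj}(N_{2})\,\boldsymbol{c}\in\{-2,8\}$ when $N=N_{2}$. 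After computing the two integer matrices $\mathrm{adj}(N_{1})$ and $\mathrm{adj}(N_{2})$ once, the task reduces to a finite search over the $16\times16$ border pairs $(\boldsymbol{c},\boldsymbol{r})$ for each base block.

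To keep this search manageable I would exploit the symmetries of the two base blocks: any row/column permutation fixing $N_{i}$ acts on the border pair $(\boldsymbol{c},\boldsymbol{r})$ while preserving the constraint, so it suffices to test one representative per orbit of borders. Collecting the surviving matrices and then quotienting by the full equivalence relation should leave exactly the three classes $M_{1},M_{2},M_{3}$; here pairwise inequivalence is immediate, because the number of $1$'s is a permutation invariant and equals $15$, $13$ and $17$ for $M_{1}$, $M_{2}$, $M_{3}$ respectively. The real work, and the main obstacle, is establishing \emph{completeness} of the list rather than any single hard step: one must run the finite case analysis carefully so as neither to overcount within a family nor to miss \emph{cross}-equivalences between matrices produced from $N_{1}$ and those produced from $N_{2}$ — the $\pm3$ minor furnished by Claim 1 is not unique, so the same class can be reached from both blocks — and one must confirm that every admissible border genuinely falls into one of the three classes (a coarser invariant such as the multiset of absolute values of the twenty-five order-$4$ minors can be used to cross-check the merging).
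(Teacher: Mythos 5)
Your reduction is correct and would prove the claim, but it organizes the endgame differently from the paper. Both arguments rest on the same two pillars, Claim 1 and Lemma 1, and both ultimately border a $4\times 4$ block of determinant $\pm 3$; the difference is in how the border is controlled. The paper never writes down your Schur-complement identity. Instead it first constrains column types by hand: no column of $M$ can lie in $V_{1}$ or $V_{5}$, the all-$V_{4}$ matrix has determinant $\pm 4$, so the column $\boldsymbol{\lambda_{5}}$ with the most zeros lies in $V_{2}\sqcup V_{3}$; then, casing on $\boldsymbol{\lambda_{5}}\in V_{2}$ or $V_{3}$ and on the minor pattern along it, Lemma 1 and Claim 1 pin down the first four columns \emph{and} the fifth column completely, leaving only the bottom row $(n_{1},\dots,n_{4})$ undetermined — six candidate families, each a $2^{4}$-search settled by direct determinant computation. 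Your scheme is conceptually cleaner: the identity $\det M=\det N-\boldsymbol{r}^{\mathrm{T}}\,\mathrm{adj}(N)\,\boldsymbol{c}$ (which is correct, as are $\det N_{1}=-3$, $\det N_{2}=3$ and the target sets $\{-8,2\}$ and $\{-2,8\}$) makes the border constraint both necessary and sufficient in one line, makes the $GL(5,\Z_{2})$ hypothesis automatic, and subsumes all of the paper's preliminary exclusions of $V_{1}$, $V_{4}$, $V_{5}$ columns; the price is a raw search over $2^{8}$ border pairs per base block rather than $2^{4}$ per family, plus the cross-equivalence bookkeeping you rightly flag (the $\pm 3$ minor furnished by Claim 1 is not unique, so the two base blocks can generate the same class). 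Your inequivalence check via the ones-count ($15$, $13$, $17$) is valid and is a point the paper leaves implicit. Nothing in your reduction needs repair; what remains unexecuted is a finite mechanical enumeration of the same nature as — though several times larger than — the computations the paper asserts as ``direct computation.''
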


\begin{proof}
Since elements in $\mathrm{GL}(4,\Z_{2})$ can not have integral determinant $\pm 5$, none of the column belongs to $V_{1}$. It turns out for the same reason, none of the column belongs to $V_{5}$ either. If not, fix $\boldsymbol{\lambda_{1}}=\boldsymbol{v_{31}}$, then the other columns do not belong to $V_{4}$ and any two column vectors $\boldsymbol{\lambda_{i}},\boldsymbol{\lambda_{j}}$ belonging to $V_{3}$ share one common zero coordinate, since otherwise basic column operations will lead to a 5-dimensional binary matrix with one column belonging to $V_{1}$. The remaining cases are equivalent to either $\boldsymbol{\lambda_{2}}\in V_{2}$ or $\{\boldsymbol{\lambda_{2}},\boldsymbol{\lambda_{3}},\boldsymbol{\lambda_{4}},\boldsymbol{\lambda_{5}}\}\subseteq V_{3}$ pairwise sharing a common zero coordinate. In the former case, expansion by minors on $\boldsymbol{\lambda_{2}}$ yields contradiction while in the latter case, there exists one row with four zeros, leading to contradition as well. Since the matrix with all five columns in $V_{4}$ has determinant $\pm 4$, we can assume that $\boldsymbol{\lambda_{5}}\in V_{2}\sqcup V_{3}$ is the column with most zeros.

If $\boldsymbol{\lambda_{5}}\in V_{2}$, then combining Lemma \ref{rank=4 det=3} and Claim \ref{rank=5 det=5 A}, $M$ is equivalent to:
\begin{equation*}
\begin{pmatrix} 1 & 1 & 1 & 0 & 0 \\ 1 & 1 & 0 & 1 & 0 \\ 1 & 0 & 1 & 1 & 0 \\ 0 & 1 & 1 & 1 & 1 \\ n_{1} & n_{2} & n_{3} & n_{4} & 1 \end{pmatrix}\ or\ \begin{pmatrix} 1 & 1 & 1 & 0 & 1 \\ 1 & 0 & 0 & 1 & 0 \\ 0 & 1 & 0 & 1 & 0 \\ 0 & 0 & 1 & 1 & 0 \\ n_{1}' & n_{2}' & n_{3}' & n_{4}' & 1 \end{pmatrix}\ or\ \begin{pmatrix} 1 & 1 & 1 & 0 & 0 \\ 1 & 0 & 0 & 1 & 0 \\ 0 & 1 & 0 & 1 & 0 \\ 0 & 0 & 1 & 1 & 1 \\ n_{1}'' & n_{2}'' & n_{3}'' & n_{4}'' & 1 \end{pmatrix} 
\end{equation*}
where $\{n_{i},n_{i}',n_{i}''\}_{i=1}^{4}$ belong to $\{0,1\}$. Direct computation of determinant yields $M_{1}$ for the first matrix and $M_{2}$ for the third matrix while there is no solution for the second matrix above.

Similarly, if $\boldsymbol{\lambda_{5}}\in V_{3}$, then $M$ is equivalent to:
\begin{equation*}
\begin{pmatrix} 1 & 1 & 1 & 0 & 0 \\ 1 & 1 & 0 & 1 & 0 \\ 1 & 0 & 1 & 1 & 1 \\ 0 & 1 & 1 & 1 & 1 \\ n_{1} & n_{2} & n_{3} & n_{4} & 1 \end{pmatrix}\ or\ \begin{pmatrix} 1 & 1 & 1 & 0 & 1 \\ 1 & 0 & 0 & 1 & 1 \\ 0 & 1 & 0 & 1 & 0 \\ 0 & 0 & 1 & 1 & 0 \\ 1 & 1 & 1 & n_{4}' & 1 \end{pmatrix}\ or\ \begin{pmatrix} 1 & 1 & 1 & 0 & 0 \\ 1 & 0 & 0 & 1 & 1 \\ 0 & 1 & 0 & 1 & 1 \\ 0 & 0 & 1 & 1 & 0 \\ 1 & 1 & 1 & n_{4}'' & 1 \end{pmatrix} 
\end{equation*}
with $n_{1},n_{2},n_{3},n_{4},n_{4}',n_{4}''\in \{0,1\}$. Direct computation of determinant yields $M_{3}$ for the first matrix while there is no solution for the second and third matrix above. 
\end{proof}

\subsection{Elements in $\mathrm{GL}(5,\Z_{2})$ with integral determinant $\pm 3$}

In this subsection, suppose $\mathrm{det}M=\pm 3$ with $\boldsymbol{\lambda_{5}}$ having most zeros and $m_{55}=1$ by equivalence. According to the proof of Claim \ref{rank=5 det=5 B}, it is necessary for $\boldsymbol{\lambda_{5}}\in V_{1}\sqcup V_{2}\sqcup V_{3}$ in this case. The following discussion is based on expansion by minors on $\boldsymbol{\lambda_{5}}$ with argument similar to Lemma \ref{rank=4 det=2}. 

\vspace{0.5cm}
\noindent \emph{Class 1}\quad $\boldsymbol{\lambda_{5}}\in V_{1}$. \\
By assumption and Lemma \ref{rank=4 det=3}, $\boldsymbol{\lambda_{5}}=\boldsymbol{v_{5}}$ and $M$ is equivalent to one of the following types:
\begin{equation*}
N_{1}^{a}=\begin{pmatrix} 1 & 1 & 1 & 0 & 0 \\ 1 & 1 & 0 & 1 & 0 \\ 1 & 0 & 1 & 1 & 0 \\ 0 & 1 & 1 & 1 & 0 \\ n_{1} & n_{2} & n_{3} & n_{4} & 1 \end{pmatrix}\qquad N_{1}^{b}=\begin{pmatrix} 1 & 1 & 1 & 0 & 0 \\ 1 & 0 & 0 & 1 & 0 \\ 0 & 1 & 0 & 1 & 0 \\ 0 & 0 & 1 & 1 & 0 \\ n_{1}' & n_{2}' & n_{3}' & n_{4}' & 1 \end{pmatrix}
\end{equation*}
where $\{n_{i},n_{i}'\}_{i=1}^{4}$ belong to $\{0,1\}$. 

\vspace{0.5cm}
\noindent \emph{Class 2}\quad $\boldsymbol{\lambda_{5}}\in V_{2}$. \\
The corresponding minors of element 1 in $\boldsymbol{\lambda_{5}}$ are either $\pm (0,3)$ or $\pm (1,2)$ and it can be assumed that $M_{55}=\pm 3$ or $\pm 2$ by equivalence. 

In the former case, computation followed from Lemma \ref{rank=4 det=3} shows that $M$ is equivalent to one of the following types:
\begin{equation*}
N_{2}^{a}=\begin{pmatrix} 1 & 1 & 1 & 0 & 1 \\ 1 & 1 & 0 & 1 & 0 \\ 1 & 0 & 1 & 1 & 0 \\ 0 & 1 & 1 & 1 & 0 \\ n_{1} & n_{2} & n_{3} & n_{4} & 1 \end{pmatrix}\quad N_{2}^{b}=\begin{pmatrix} 1 & 1 & 1 & 0 & 1 \\ 1 & 0 & 0 & 1 & 0 \\ 0 & 1 & 0 & 1 & 0 \\ 0 & 0 & 1 & 1 & 0 \\ n_{1}' & n_{2}' & n_{3}' & n_{4}' & 1 \end{pmatrix}\quad N_{2}^{c}=\begin{pmatrix} 1 & 1 & 1 & 0 & 0 \\ 1 & 0 & 0 & 1 & 0 \\ 0 & 1 & 0 & 1 & 0 \\ 0 & 0 & 1 & 1 & 1 \\ n_{1}'' & n_{2}'' & n_{3}'' & n_{4}'' & 1 \end{pmatrix} 
\end{equation*}
where $\{n_{i},n_{i}',n_{i}''\}_{i=1}^{4}$ belong to $\{0,1\}$ and satisfy $n_{1}+2n_{4}=n_{2}+n_{3},\ n_{1}'+n_{2}'+n_{3}'=n_{4}'$ and $n_{1}''+n_{2}''=2n_{3}''+n_{4}''$ respectively. 

In the latter case, type (1) matrix in Lemma \ref{rank=4 det=2} induces four types of equivalent classes for $M$: \\
\begin{equation*}
N_{3}^{a}=\begin{pmatrix} 1 & 1 & 1 & 1 & 0 \\ 0 & 1 & 1 & 0 & 0 \\ 0 & 1 & 0 & 1 & 0 \\ 0 & 0 & 1 & 1 & 1 \\ 1 & 0 & 0 & 0 & 1 \end{pmatrix} \qquad N_{3}^{b}=\begin{pmatrix} 1 & 0 & x_{2} & x_{3} & 0 \\ 0 & 1 & 1 & 0 & 0\\ 0 & 1 & 0 & 1 & 0 \\ 0 & 0 & 1 & 1 & 1 \\ 1 & 1 & n_{3} & n_{4} & 1 \end{pmatrix}
\end{equation*}
\begin{equation*}
N_{3}^{c}=\begin{pmatrix} 1 & 0 & x_{2}' & x_{3}' & 0 \\ 0 & 1 & 1 & 0 & 0 \\ 0 & 1 & 0 & 1 & 0 \\ 0 & 0 & 1 & 1 & 1 \\ 1 & 0 & n_{3}' & n_{4}' & 1 \end{pmatrix}\qquad N_{3}^{d}=\begin{pmatrix} 1 & 1 & x_{2}'' & x_{3}'' & 0 \\ 0 & 1 & 1 & 0 & 0 \\ 0 & 1 & 0 & 1 & 0 \\ 0 & 0 & 1 & 1 & 1 \\ 1 & 1 & n_{3}'' & n_{4}'' & 1  \end{pmatrix} 
\end{equation*}
where $\{x_{i},x_{i}',x_{i}''\}_{i=2}^{3}$ and $\{n_{j},n_{j}',n_{j}''\}_{j=3}^{4}$ belong to $\{0,1\}$ and satisfy $x_{2}+x_{3}=n_{3}+n_{4},\ x_{2}'+x_{3}'=n_{3}'+n_{4}'+1$ and $x_{2}''+x_{3}''=n_{3}''+n_{4}''+1$ respectively. \\
Similarly, the first matrix of type (2) in Lemma \ref{rank=4 det=2} induces two types of equivalent classes for $M$: \\
\begin{equation*}
N_{4}^{a}=\begin{pmatrix} 1 & 1 & 1 & 1 & 1 \\ 1 & 1 & 0 & 0 & 0 \\ 1 & 0 & 1 & 0 & 0 \\ 1 & 0 & 0 & 1 & 0 \\ 1 & 0 & 0 & 0 & 1 \end{pmatrix} \qquad N_{4}^{b}=\begin{pmatrix} 1 & 1 & 1 & 1 & 0 \\ 1 & 1 & 0 & 0 & 0\\ 1 & 0 & 1 & 0 & 0 \\ 1 & 0 & 0 & 1 & 1 \\ n_{1} & n_{2} & n_{3} & n_{4} & 1 \end{pmatrix}
\end{equation*}
where $\{n_{i}\}_{i=1}^{4}$ belong to $\{0,1\}$ and satisfy $n_{1}+n_{4}+1=n_{2}+n_{3}$. \\
While the second matrix of type (2) in Lemma \ref{rank=4 det=2} induces three types of equivalent classes for $M$: \\
\begin{equation*}
N_{5}^{a}=\begin{pmatrix} 1 & 1 & 1 & 0 & 1 \\ 1 & 1 & 0 & 1 & 0 \\ 1 & 0 & 1 & 1 & 0 \\ 1 & 0 & 0 & 0 & 0 \\ n_{1} & 0 & 0 & 0 & 1 \end{pmatrix} \quad N_{5}^{b}=\begin{pmatrix} 1 & 1 & 1 & 0 & 0 \\ 1 & 1 & 0 & 1 & 0\\ 1 & 0 & 1 & 1 & 0 \\ 1 & 0 & 0 & 0 & 1 \\ 0 & 1 & 0 & 0 & 1 \end{pmatrix} \quad N_{5}^{c}=\begin{pmatrix} 1 & 1 & 1 & 0 & 0 \\ 1 & 1 & 0 & 1 & 0\\ 1 & 0 & 1 & 1 & 0 \\ 1 & 0 & 0 & 0 & 1 \\ 1 & 1 & 1 & 1 & 1 \end{pmatrix}
\end{equation*}
with $n_{1}\in \{0,1\}$. \\
Similarly, the first matrix of type (3) in Lemma \ref{rank=4 det=2} induces two types of equivalent classes for $M$: \\
\begin{equation*}
N_{6}^{a}=\begin{pmatrix} 1 & 1 & 0 & 0 & 0 \\ 1 & 0 & 1 & 0 & 0 \\ 0 & 1 & 1 & 1 & 1 \\ 0 & 0 & 0 & 1 & 0 \\ 1 & 0 & 0 & n_{4} & 1 \end{pmatrix} \qquad N_{6}^{b}=\begin{pmatrix} 1 & 1 & 0 & 0 & 0 \\ 1 & 0 & 1 & 0 & 0\\ 0 & 1 & 1 & 1 & 0 \\ 0 & 0 & 0 & 1 & 1 \\ n_{1}' & n_{2}' & n_{3}' & 0 & 1 \end{pmatrix}
\end{equation*}
where $\{n_{1}',n_{2}',n_{3}',n_{4}\}$ belong to $\{0,1\}$ and satisfy $n_{1}'+1=n_{2}'+n_{3}'$. \\
And the second matrix of type (3) in Lemma \ref{rank=4 det=2} induces two types of equivalent classes for $M$: \\
\begin{equation*}
N_{7}^{a}=\begin{pmatrix} 1 & 0 & y_{1} & y_{2} & 0 \\ 1 & 1 & 1 & 0 & 0 \\ 1 & 1 & 0 & 1 & 0 \\ 0 & 0 & 1 & 1 & 1 \\ n_{1} & n_{2} & n_{3} & n_{4} & 1 \end{pmatrix} \qquad N_{7}^{b}=\begin{pmatrix} 1 & 0 & y_{1}' & y_{2}' & 0 \\ 1 & 1 & 1 & 0 & 0\\ 1 & 1 & 0 & 1 & 1 \\ 0 & 0 & 1 & 1 & 0 \\ n_{1}' & n_{2}' & n_{3}' & n_{4}' & 1 \end{pmatrix}
\end{equation*}
where $\{n_{i},n_{i}'\}_{i=1}^{4}$ and $\{y_{j},y_{j}'\}_{j=1}^{2}$ belong to $\{0,1\}$ and satisfy $n_{2}-n_{3}-n_{4}+(y_{1}+y_{2})(n_{1}-n_{2})=1$ and $n_{3}'-n_{2}'-n_{4}'+(y_{2}'-y_{1}')(n_{1}'-n_{2}')=1$ respectively. 

\vspace{0.5cm}
\noindent \emph{Class 3}\quad $\boldsymbol{\lambda_{5}}\in V_{3}$. \\
This class can be further divided into three cases with regard to the largest absolute value of minors corresponding to element 1 in $\boldsymbol{\lambda_{5}}$. 

If the minor equals to $\pm 3$, then there are two types of equivalent classes induced by matrices in Lemma \ref{rank=4 det=3}: 
\begin{equation*}
N_{8}^{a}=\begin{pmatrix} 1 & 1 & 1 & 0 & 0 \\ 1 & 1 & 0 & 1 & 0 \\ 1 & 0 & 1 & 1 & 1 \\ 0 & 1 & 1 & 1 & 1 \\ n_{1} & n_{2} & n_{3} & n_{4} & 1 \end{pmatrix} \qquad N_{8}^{b}=\begin{pmatrix} 1 & 1 & 1 & 0 & 0 \\ 1 & 0 & 0 & 1 & 0\\ 0 & 1 & 0 & 1 & 1 \\ 0 & 0 & 1 & 1 & 1 \\ 1 & 1 & 1 & 0 & 1 \end{pmatrix}
\end{equation*}
where $\{n_{i}\}_{i=1}^{4}$ belong to $\{0,1\}$ and satisfy $n_{1}+n_{2}=2n_{3}+2n_{4}$. 

For the other two cases, argument can be simplified by the following two claims:
\begin{claim} \label{rank=5 det=3 A}
If $M\in \mathrm{GL}(5,\Z_{2})$ has integral determinant $\pm 3$ and $\boldsymbol{\lambda_{5}}\in V_{3}$ as the column with most zeros such that there exists a minor corresponding to element 1 in $\boldsymbol{\lambda_{5}}$ equal to $\pm 2$, then submatrix corresponding to this minor must be the second matrix of type \rm{(3)} in \rm{Lemma \ref{rank=4 det=2}}.
\end{claim}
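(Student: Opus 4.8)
The plan is to invoke Lemma 2 to reduce $S$, the $4\times 4$ submatrix realizing the $\pm 2$ minor, to finitely many normal forms, and then to eliminate every one of them except the second type-(3) matrix using two mod-$2$ obstructions coming from the ambient matrix $M$. Note that the determinant value $\mathrm{det}\,M=\pm 3$ mainly fixes the regime; the elimination itself will rest only on $M\in GL(5,\Z_2)$ and on the positional hypothesis $\boldsymbol{\lambda_5}\in V_3$.

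First I would record the global constraint on the columns. Since $\boldsymbol{\lambda_5}\in V_3$ is the column of $M$ with the most zeros, every column of $M$ has at most two zero entries, i.e. lies in $V_3\sqcup V_4\sqcup V_5$ and hence carries at least three $1$'s. Deleting the single row $i$ (the $1$-position of $\boldsymbol{\lambda_5}$ that defines the minor) removes at most one $1$ from each column, so every column of $S$ contains at least two $1$'s. In particular $S$ has no column equal to a standard basis vector, which immediately excludes type (1) of Lemma 2, whose first column is $(1,0,0,0)^{\mathrm T}$.

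The decisive step is a parity obstruction. Let $R$ be the set of the four rows of $M$ other than row $i$; these are exactly the rows spanning $S$. Summing the vectors of $R$ over $\Z_2$, the first four coordinates give the column-sums of $S$ reduced mod $2$, while the fifth coordinate gives $\sum_{r\in R}(\boldsymbol{\lambda_5})_r=3-1=2\equiv 0$, because $\boldsymbol{\lambda_5}$ has exactly three $1$'s, one of which sits in the deleted row $i$. Hence, whenever all four column-sums of $S$ are even, the four rows in $R$ sum to $\boldsymbol 0$ over $\Z_2$, so they are linearly dependent, forcing $\mathrm{det}_{\Z_2}M=0$ and contradicting $M\in GL(5,\Z_2)$.

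It then remains to read off the column-sum parities of the surviving normal forms, a computation that is immediate from the explicit matrices and invariant under the row/column permutations defining equivalence. The two type-(2) matrices and the first type-(3) matrix have column-sums $(4,2,2,2)$, $(4,2,2,2)$ and $(2,2,2,2)$ respectively, all even, so each is killed by the parity obstruction; whereas the second type-(3) matrix has column-sums $(3,2,2+y_1,2+y_2)$, whose first entry is odd, so the obstruction does not apply. Having eliminated type (1) and all even-column-sum forms, $S$ must be equivalent to the second matrix of type (3), as claimed. I expect the only genuine subtlety to be bookkeeping the fifth-coordinate contribution $3-1\equiv 0$ correctly, and noticing that type (1) must be handled by the separate single-$1$-column argument precisely because its column-sums are not all even.
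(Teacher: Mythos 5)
Your proof is correct and follows essentially the same route as the paper's: the paper likewise excludes type (1) because the column $(1,0,0,0)^{\mathrm T}$ would give a column of $M$ with more zeros than $\boldsymbol{\lambda_5}$, and kills both type-(2) matrices and the first type-(3) matrix by noting that the four rows of $M$ spanning the minor sum to the all-even vector $(4,2,2,2,2)$ resp.\ $(2,2,2,2,2)$, contradicting the odd determinant. Your writeup merely makes explicit the mod-$2$ bookkeeping (including the fifth-coordinate contribution $3-1\equiv 0$) that the paper compresses into one line.
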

\begin{proof}
Without loss of generality, we can suppose $\mathrm{det}M_{55}=\pm 2$, then a straightforward check on matrices listed in Lemma \ref{rank=4 det=2} verifies this claim.

For type (1), $\boldsymbol{\lambda_{1}}$ has more zeros than $\boldsymbol{\lambda_{5}}$, which is not allowed. For type (2), the sum of first row to fourth row is (4,2,2,2,2), leading to a contradiction against $\mathrm{det}(M)=\pm 3$. The first matrix in type (3) can not appear for the same reason. 
\end{proof}

\begin{claim} \label{rank=5 det=3 B}
If $M\in \mathrm{GL}(5,\Z_{2})$ has integral determinant $\pm 3$ and $\boldsymbol{\lambda_{5}}\in V_{3}$ as the column with most zeros such that minors corresponding to element 1 in $\boldsymbol{\lambda_{5}}$ are $\pm (1,1,1)$, then either $M$ consists of four columns in $V_{3}$ plus $\boldsymbol{v_{31}}$ or $M$ has a column in $V_{3}$ contained in another column in $V_{4}$.
\end{claim}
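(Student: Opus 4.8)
The plan is to pin down the first four columns $\boldsymbol{\lambda_1},\dots,\boldsymbol{\lambda_4}$ by combining the three $\pm1$ cofactors with the integrality of $\mathrm{det}M$, and then read off the dichotomy from the resulting weight pattern. First I would normalize by equivalence so that $\boldsymbol{\lambda_5}=(0,0,1,1,1)^{\mathrm{T}}$, its three $1$'s sitting in rows $3,4,5$; the hypothesis then says that the $(5\times4)$ matrix $L=(\boldsymbol{\lambda_1},\dots,\boldsymbol{\lambda_4})$ has the property that deleting any one of rows $3,4,5$ yields a $(4\times4)$ determinant equal to $\pm1$. Two cheap reductions come first: no $\boldsymbol{\lambda_j}$ may equal $\boldsymbol{v_{31}}$ more than once (a repeated column kills the determinant), and no $\boldsymbol{\lambda_j}$ ($j\le4$) may vanish in both rows $1$ and $2$, since a vector of $V_3\sqcup V_4\sqcup V_5$ with zeros in rows $1,2$ is forced to equal $\boldsymbol{\lambda_5}$; in particular rows $1$ and $2$ of $M$ are both nonzero.

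The engine of the proof is a single integral relation among the rows of $L$. Since the three displayed minors are nonzero, $L$ has rank $4$, so its left kernel is spanned by a vector $\boldsymbol{n}=(n_1,\dots,n_5)$ whose entries are, up to sign, the five maximal minors of $L$; by hypothesis $n_3,n_4,n_5=\pm1$, and the requirement $\mathrm{det}M=\pm3$ forces these three to be equal once the cofactor signs are taken into account. Normalizing gives $r_3+r_4+r_5=\mu_1 r_1+\mu_2 r_2$, where $r_i$ is the $i$-th row of $L$ and $\mu_1,\mu_2$ are integers with $|\mu_i|\le 3$ by the Hadamard bound for binary $(4\times4)$ matrices. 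Reading this columnwise, for each $p\le4$ the quantities $x_p=m_{1p},\ y_p=m_{2p}\in\{0,1\}$ and $z_p=m_{3p}+m_{4p}+m_{5p}$ satisfy $z_p=\mu_1 x_p+\mu_2 y_p$, while the weight condition forces $x_p+y_p+z_p\in\{3,4,5\}$ and $(x_p,y_p)\ne(0,0)$. This converts the problem into bookkeeping on the pair $(\mu_1,\mu_2)$: each column is of type $(1,0),(0,1)$ or $(1,1)$, and its membership in $V_3,V_4$ or $V_5$ is dictated by $\mu_1$, $\mu_2$, or $\mu_1+\mu_2$ respectively.

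From here I would run the finitely many feasible pairs $(\mu_1,\mu_2)$, using the symmetry that swaps rows $1$ and $2$ to halve the work. Whenever some column lands in $V_5$ (exactly when a $(1,1)$-column has $\mu_1+\mu_2=3$) the all-ones column is unique and all remaining columns are forced into $V_3$, yielding the first alternative. Whenever a $V_4$-column appears, a short check on the explicit shapes shows that $\boldsymbol{\lambda_5}=(0,0,1,1,1)^{\mathrm{T}}$, or one of the $(1,0)$-type $V_3$ columns, is contained in it, yielding the second alternative.

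The hard part will be the residual configurations in which all five columns already lie in $V_3$; these correspond to $(\mu_1,\mu_2)=(2,2)$ and to $(2,-1)$ together with its mirror, and here neither alternative is visible. Indeed, writing $M=J-U$ with $U$ the vertex--edge incidence matrix of a $5$-edge graph exhibits all-$V_3$ matrices with $\mathrm{det}=\pm3$ whose three pivot cofactors are all $\pm1$, so this case cannot be closed inside the expansion on $\boldsymbol{\lambda_5}$. I expect to dispose of it by changing the pivot: in each such matrix I would exhibit a second column of $V_3$ (again a column attaining the maximal number of zeros) whose cofactor expansion produces a minor of absolute value $2$, so that the matrix is already governed by Claim 3 and need not be re-listed under the present hypothesis. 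Verifying that such an alternative pivot always exists — equivalently, that an all-$V_3$ determinant-$\pm3$ matrix cannot simultaneously have all pivot cofactors equal to $\pm1$ for \emph{every} maximal-zero column — is the single step that genuinely uses the integral rather than the mod-$2$ structure, and is where I would concentrate the effort.
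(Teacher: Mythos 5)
Your kernel-relation engine is sound and is a genuinely different mechanism from the paper's hands-on case analysis: with $\boldsymbol{\lambda_{5}}=(0,0,1,1,1)^{\mathrm{T}}$, the three unit minors and $\mathrm{det}M=\pm 3$ do force $r_{3}+r_{4}+r_{5}=\mu_{1}r_{1}+\mu_{2}r_{2}$ with $|\mu_{i}|\leq 3$, your columnwise bookkeeping is correct, and the configurations containing a $V_{5}$- or $V_{4}$-column do come out as you say (the pairs $(2,1),(1,2)$ force $\boldsymbol{v_{31}}$ plus four $V_{3}$-columns, i.e.\ the first alternative; $(3,0),(0,3),(1,1),(2,0),(0,2)$ cannot be filled with four distinct admissible columns without creating a containment; $(3,-1),(-1,3)$ force a $V_{4}$-column containing $\boldsymbol{\lambda_{5}}$). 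But the proposal stops exactly where the paper's proof does its real work: the all-$V_{3}$ pairs $(2,2),(2,-1),(-1,2)$. The statement you defer --- that an all-$V_{3}$ matrix with $\mathrm{det}=\pm 3$ cannot have all three pivot minors equal to $\pm 1$ at \emph{every} maximal-zero column --- is precisely the paper's step ``every element $1$ in $M$ must have minor $\pm 1$'' leading to a contradiction, and the paper closes it by a short normalization: five $2$-element zero-sets in a $5$-set cannot be pairwise disjoint, so two columns share a zero coordinate; taking $\boldsymbol{\lambda_{4}}=(0,1,1,1,0)^{\mathrm{T}}$, $\boldsymbol{\lambda_{5}}=(0,1,1,0,1)^{\mathrm{T}}$, a row meeting only one or two $\pm 1$ cofactors cannot sum to $\pm 3$, so the first row must be $(1,1,1,0,0)$, which forces $\boldsymbol{v_{21}}=(1,0,0,1,1)^{\mathrm{T}}$ among the columns; the matrix is then determined up to equivalence and its $(1,1)$-minor equals $2$, a contradiction. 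Note also that your reduction ``the alternative pivot is governed by Claim 3'' becomes automatic once this is known, since expanding $\pm 3$ along a column whose minors lie in $\{0,\pm 1\}$ forces all three to be $\pm 1$ with equal cofactor signs; hence any column that is not all-$\pm 1$ already carries a minor of absolute value $2$ or $3$. Since you did not execute this step, the proposal is incomplete at the claim's crux.

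That said, your $J-U$ observation is correct, and it is sharper than anything the paper makes explicit: the matrix with columns $(1,0,1,1,0)^{\mathrm{T}}$, $(1,0,1,0,1)^{\mathrm{T}}$, $(0,1,1,1,0)^{\mathrm{T}}$, $(0,1,0,1,1)^{\mathrm{T}}$, $(0,0,1,1,1)^{\mathrm{T}}$ has determinant $3$, all columns in $V_{3}$, and minors $(1,-1,1)$ at the ones of the last column, so the claim read literally --- hypothesis imposed on one chosen column with most zeros, ties allowed --- is false, and neither alternative of the dichotomy holds for this matrix (it carries a $\pm 2$ minor only at \emph{other} columns, e.g.\ the $(1,1)$-entry). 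So the claim is only true under the strengthened reading that the paper's own proof silently adopts, namely that the $\pm(1,1,1)$ condition holds for every column attaining the maximal number of zeros; your reinterpretation via changing the pivot is therefore not optional but forced, and it also shows the paper's trichotomy on the single pivot $\boldsymbol{\lambda_{5}}$ needs the equivalence-class-level convention to be stated to make the enumeration airtight. In short: your framework is correct and your diagnosis of the statement is a genuine insight, but the decisive impossibility argument for the all-$V_{3}$ case --- the heart of the paper's proof --- remains unproven in your proposal.
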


\begin{proof}
Since coexistence of an element in $V_{4}$ and $\boldsymbol{v_{31}}$ is not allowed by determinant restriction, it suffices to discuss cases where $M$ is consisted of elements all belong to $V_{3}$ or $V_{3}\sqcup V_{4}$.

In the former case, every element 1 in $M$ must have minor $\pm 1$ and there exist two columns with a common zero coordinate. Suppose $\boldsymbol{\lambda_{4}}=(0,1,1,1,0)^{\mathrm{T}}, \boldsymbol{\lambda_{5}}=(0,1,1,0,1)^{\mathrm{T}}$ by equivalence, then the first row must be $(1,1,1,0,0)$, which in turn guarantees the existence of $\boldsymbol{v_{21}}$. Suppose $\boldsymbol{\lambda_{3}}=\boldsymbol{v_{21}}=(1,0,0,1,1)^{\mathrm{T}}$ by equivalence again, then $M$ must be equivalent to
\begin{equation*}
\begin{pmatrix} 1 & 1 & 1 & 0 & 0 \\ 1 & 0 & 0 & 1 & 1 \\ 0 & 1 & 0 & 1 & 1 \\ 1 & 0 & 1 & 1 & 0 \\ 0 & 1 & 1 & 0 & 1 \end{pmatrix}. 
\end{equation*}
However, the $(1,1)$-element has minor equal to 2, resulting in contradiction.

In the latter case, if the number of columns in $V_{3}$ is no more than two, then there must be one column in $V_{3}$ contained in another column belonging to $V_{4}$. If there are three columns belonging to $V_{3}$ and none of them is contained in another column belonging to $V_{4}$, then they must share two common nonzero coordinates i.e., $M$ is equivalent to
\begin{equation*}
\begin{pmatrix} 1 & 1 & 1 & 0 & 0 \\ 1 & 1 & 0 & 1 & 0 \\ 1 & 1 & 0 & 0 & 1 \\ 1 & 0 & 1 & 1 & 1 \\ 0 & 1 & 1 & 1 & 1 \end{pmatrix}. 
\end{equation*}
However, the $(3,5)$-element has minor equal to $-$3, resulting in contradiction. If four columns of $M$ is in $V_{3}$ and none of them is contained in the rest column belonging to $V_{4}$, then we can suppose $\boldsymbol{\lambda_{1}}=(0,1,1,1,1)^{\mathrm{T}}$ by equivalence. Expansion by minors on the first row leads to even determinant, which is not allowed. 
\end{proof}

Combining Claim \ref{rank=5 det=3 A} and \ref{rank=5 det=3 B} with Lemma \ref{rank=4 det=3}, the remaining possible equivalent classes for $M$ can be listed below:
\begin{equation*}
N_{9}^{a}=\begin{pmatrix} 1 & 0 & 0 & 0 & 0 \\ 1 & 1 & 1 & 1 & 0 \\ 1 & 1 & 1 & 0 & 1 \\ 1 & 1 & 0 & 1 & 1 \\ 1 & 0 & 1 & 1 & 1 \end{pmatrix} \qquad N_{9}^{b}=\begin{pmatrix} n_{1} & n_{2} & n_{3} & 1 & 0 \\ 1 & 1 & 1 & 0 & 0\\ 1 & 1 & 0 & 1 & 1 \\ 1 & 0 & 1 & 1 & 1 \\ 0 & 1 & 1 & 1 & 1 \end{pmatrix}
\end{equation*}
with $\{n_{i}\}_{i=1}^{3}$ belonging to $\{0,1\}$.

\begin{remark}
Equivalent classes in this subsection may overlap. For instance, taking $n_{1}'=n_{2}'=n_{3}'=1$ in $N_{6}^{b}$ leads to the same equivalent class as $N_{3}^{a}$. In fact, there are 51 equivalent classes according to the counting table in \cite{Zi06}.
\end{remark}

\subsection{Construction of $\phi$ and $\psi$}

Now we are in the position to construct map $\phi$ and $\psi$ for the validity of $(\star 1)$ or $(\star 2)$. Restrict $\phi$ and $\psi$ to be constant on each $V_{i}$, then it suffices to do verification at the level of equivalent class.
\begin{claim} \label{coloring value}
$\phi(\boldsymbol{v_{i}})=\left\{\begin{aligned} 0\quad & \boldsymbol{v_{i}}\in V_{1} \\ 1\quad & \boldsymbol{v_{i}}\in V_{2}\sqcup V_{3}\sqcup V_{4} \\ 2\quad & \boldsymbol{v_{i}}\in V_{5} \end{aligned} \right.$ and $\psi(\boldsymbol{v_{i}})=\left\{\begin{aligned} 0\quad & \boldsymbol{v_{i}}\in V_{2}\sqcup V_{4} \\ 1\quad & \boldsymbol{v_{i}}\in V_{3} \\ 2\quad & \boldsymbol{v_{i}}\in V_{1}\sqcup V_{5} \end{aligned} \right.$ guarantee the validity of $(\star 1)$ or $(\star 2)$ in all equivalent classes above.
\end{claim}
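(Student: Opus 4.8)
The plan is to reduce the statement to a finite verification on the equivalence-class representatives assembled in the two preceding subsections, and then to check the arithmetic condition $(\star 1)$ or $(\star 2)$ on each. The starting point is that for any simplex $\{\boldsymbol{v_{i_1}},\dots,\boldsymbol{v_{i_5}}\}$ the matrix $A=(\boldsymbol{v_{i_1}},\dots,\boldsymbol{v_{i_5}})$ lies in $GL(5,\Z_2)$, so its integral determinant is odd, while the Hadamard-type bound recalled in Section 3.1 (from \cite{Ha93}) forces $|\mathrm{det}A|\le 5$; hence $|\mathrm{det}A|\in\{1,3,5\}$. If $|\mathrm{det}A|=1$ then $(\star 1)$ holds automatically, so only $|\mathrm{det}A|=5$ and $|\mathrm{det}A|=3$ need to be treated. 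By Claim 2 the case $|\mathrm{det}A|=5$ is exhausted by $M_1,M_2,M_3$, and by the classification of Section 3.3 the case $|\mathrm{det}A|=3$ is exhausted by the families $N_1^a,N_1^b,\dots,N_9^b$ together with their $\{0,1\}$-parameters.

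Next I would record why it is legitimate to work with one representative per class. The values $s_{i_k}=\phi(\boldsymbol{v_{i_k}})$ and $t_{i_k}=\psi(\boldsymbol{v_{i_k}})$ depend only on the Hamming weight of the $k$-th column, i.e. on which $V_i$ it lies in, and the weight partition is preserved by every permutation of the rows. Consequently a column permutation multiplies all of $\mathrm{det}A,\mathrm{det}(A)_1,\dots,\mathrm{det}(A)_5$ by the same sign, while a row permutation leaves the $s$-row fixed and merely permutes $\{(A)_1,\dots,(A)_5\}$ up to sign; in both cases the multiset $\{|\mathrm{det}A|,|\mathrm{det}(A)_1|,\dots,|\mathrm{det}(A)_5|\}$ is unchanged, and likewise for the $t$-row. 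Therefore the truth of $(\star 1)$ and of $(\star 2)$ is an invariant of the equivalence class, so verifying the listed representatives suffices.

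The remaining content is a finite determinant computation. For each representative I would read off the five column weights, assign $s_{i_k}$ and $t_{i_k}$ by the prescription in the claim, and evaluate $\mathrm{det}A$ together with the six substituted determinants — first for the $s$-row, testing $(\star 1)$, and, if that greatest common divisor still exceeds $1$, for the $t$-row, testing $(\star 2)$. The parameter-free representatives ($M_1,M_2,M_3$ and several of the $N$-classes) require a single such computation each; the parametrized ones expand into affine functions of the entries $n_i,x_j,y_j\in\{0,1\}$, so I would substitute the admissible tuples — already constrained by the congruences recorded beside each $N$-class — and check that one of the two gcd conditions equals $1$ for every such tuple.

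The principal difficulty is organizational rather than conceptual. One must decide, for each class, which of $(\star 1)$ and $(\star 2)$ to invoke and confirm that the chosen condition succeeds for all admissible parameters. Since $\phi$ is constant equal to $1$ on $V_2\sqcup V_3\sqcup V_4$, the $s$-row collapses to $(1,1,1,1,1)$ for every class all of whose columns have weight in $\{2,3,4\}$, and in those cases $(\star 1)$ may fail; the burden then falls on $\psi$, whose three values $0,1,2$ genuinely separate $V_2\sqcup V_4$, $V_3$ and $V_1\sqcup V_5$. The coupled parameter families such as $N_3^b,N_3^c,N_3^d$ and the pair $N_7^a,N_7^b$ are where this dichotomy must be checked most carefully, ensuring that no admissible choice of $n_i,x_j,y_j$ escapes both conditions.
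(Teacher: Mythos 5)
Your proposal is correct and follows essentially the same route as the paper: a finite determinant verification over the classified representatives, testing $(\star 1)$ first (which succeeds for $M_{1},M_{2},M_{3}$ and every $N$-class except the parameter choices degenerating to $X_{1}$ and $X_{2}\sim X_{3}$, arising from $N_{3}^{b}$, $N_{3}^{c}$ and $N_{5}^{b}$ --- the $N_{7}$ pair in fact passes $(\star 1)$) and falling back on the $\psi$-row condition $(\star 2)$ exactly there. Your explicit argument that row and column permutations preserve the multiset $\{|\mathrm{det}A|,|\mathrm{det}(A)_{1}|,\dots,|\mathrm{det}(A)_{5}|\}$ because $\phi$ and $\psi$ depend only on column weight supplies the justification, left implicit in the paper, for verifying only one representative per equivalence class.
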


\begin{proof}
The proof follows from straightforward calculation.


For $\{M_{i}\}_{i=1}^{3}$ in Claim \ref{rank=4 det=2}, $\mathrm{det}(M_{1})_{5}=-2$, $\mathrm{det}(M_{2})_{5}=2$ and $\mathrm{det}(M_{3})_{5}=-1$, all coprime to $\pm 5$, leading to the validity of $(\star 1)$.

Similarly, $\mathrm{det}(N_{1}^{a})_{1}=-1$ and $\mathrm{det}(N_{1}^{b})_{1}=2$; $\mathrm{det}(N_{2}^{a})_{5}=-2$, $\mathrm{det}(N_{2}^{b})_{5}=1$ and $\mathrm{det}(N_{2}^{c})_{5}=2$; $\mathrm{det}(N_{4}^{a})_{5}=-1$ and $\mathrm{det}(N_{4}^{b})_{5}=-2$ or $-1$; $\mathrm{det}(N_{6}^{a})_{5}=-1$ and $\mathrm{det}(N_{6}^{b})_{5}=-1$; $\mathrm{det}(N_{7}^{a})_{5}=-1$ and $\mathrm{det}(N_{7}^{b})_{5}=-1$; $\mathrm{det}(N_{8}^{a})_{5}=-1$ and $\mathrm{det}(N_{8}^{b})_{5}=1$; $\mathrm{det}(N_{9}^{a})_{2}=-1$ and $\mathrm{det}(N_{9}^{b})_{5}=1$. All these determinants are coprime to $\pm 3$, making $(\star 1)$ valid in these classes.

The remaining parts are of $N_{3}^{*}$-type and $N_{5}^{*}$-type, in which $\psi$ is needed. In fact, problem lies in $N_{3}^{b}, N_{3}^{c}$ and $N_{5}^{b}$ since $\mathrm{det}(N_{3}^{a})_{5}=-2$ and $\mathrm{det}(N_{3}^{d})_{5}=-1$ or $-2$; $\mathrm{det}(N_{5}^{a})_{5}=1$ and $\mathrm{det}(N_{5}^{c})_{5}=1$. For $N_{3}^{b}$, if $n_{3}+n_{4}=0$, then $\mathrm{det}(N_{3}^{b})_{5}=-1$ and if $n_{3}+n_{4}=1$, then $\mathrm{det}(N_{3}^{b})_{5}=-2$. However, $\mathrm{det}(N_{3}^{b})_{j}\equiv 0 \pmod{3}$ for $1 \leq j \leq 5$ if $n_{3}+n_{4}=2$. Similarly, if $n_{3}'+n_{4}'=0$, then $\mathrm{det}(N_{3}^{c})_{5}=-2$ while $\mathrm{det}(N_{3}^{c})_{j}\equiv 0 \pmod{3}$ for $1 \leq j \leq 5$ if $n_{3}'+n_{4}'=1$. For $N_{5}^{b}$, each $\mathrm{det}(N_{5}^{b})_{j}$ is also divisible by 3. To sum up, matrices that can not satisfy $(\star 1)$ with map $\phi$ defined above are:
\begin{equation*}
X_{1}=\begin{pmatrix} 1 & 0 & 1 & 1 & 0 \\ 0 & 1 & 1 & 0 & 0 \\ 0 & 1 & 0 & 1 & 0 \\ 0 & 0 & 1 & 1 & 1 \\ 1 & 1 & 1 & 1 & 1 \end{pmatrix}\quad X_{2}=\begin{pmatrix} 1 & 0 & 1 & 1 & 0 \\ 0 & 1 & 1 & 0 & 0 \\ 0 & 1 & 0 & 1 & 0 \\ 0 & 0 & 1 & 1 & 1 \\ 1 & 0 & 1 & 0 & 1 \end{pmatrix}\quad X_{3}=\begin{pmatrix} 1 & 1 & 1 & 0 & 0 \\ 1 & 1 & 0 & 1 & 0 \\ 1 & 0 & 1 & 1 & 0 \\ 1 & 0 & 0 & 0 & 1 \\ 0 & 1 & 0 & 0 & 1 \end{pmatrix}. 
\end{equation*}
Note that $X_{2}$ can be converted into $X_{3}$ by row permutation $\sigma=\left(\begin{smallmatrix} 1 & 2 & 3 & 4 & 5 \\ 1 & 4 & 5 & 2 & 3 \end{smallmatrix}\right)$ composed with column permutation $\tau=\left(\begin{smallmatrix} 1 & 2 & 3 & 4 & 5 \\ 3 & 5 & 1 & 2 & 4 \end{smallmatrix}\right)$, i.e., they belong to the same equivalent class. Therefore, calculation results $\mathrm{det}(X_{1})^{5}=-1$ and $\mathrm{det}(X_{2})^{5}=1$ complete the proof. 
\end{proof}

\begin{remark}
The value of $\psi$ on set $V_{1}\sqcup V_{5}$ is irrelevant since matrices belonging to $N_{3}^{*}$-type and $N_{5}^{*}$-type do not include column vectors in $V_{1}\sqcup V_{5}$.
\end{remark}

\begin{remark}
The value of $\phi$ can be taken modulo 15 since only coprimeness to 3 and 5 is concerned. Similarly, the value of $\psi$ can be taken modulo 3.
\end{remark}

\section{Lower bound for $s(\mathcal{K}_{1}^{n})$} \label{K1n}
\setcounter{equation}{0}

By Proposition 2, $\Delta(\mathcal{K}_{1}^{n})$ can be viewed as an upper bound for general cases. However, it remains open whether or not $\Delta(\mathcal{K}_{1}^{n})$ is bounded when n goes to $+\infty$. On the other hand, by mapping $2^{n}-1$ primitive vectors in $vt(\mathcal{K}_{1}^{n})$ to standard basis of $\Z^{2^{n}-1}$, one can easily verify that $\Delta(\mathcal{K}_{1}^{n})\leq 2^{n}-1-n$. With some symmetric modifications, this upper bound can be improved to $2^{n-2}+1-n$ for $n\geq 2$, as stated in Theorem \ref{K1n theorem}.

\begin{proof}
Since $n\geq 2$, one can choose two arbitrary primitive vectors $\boldsymbol{x},\boldsymbol{y}\in vt(\mathcal{K}_{1}^{n})$, then a partition of $vt(\mathcal{K}_{1}^{n})$ is given by $A_{0}=\{\boldsymbol{x},\boldsymbol{y},\boldsymbol{x}+\boldsymbol{y}\}$ and $A_{i}=\{\boldsymbol{a}_{i},\boldsymbol{a}_{i}+\boldsymbol{x},\boldsymbol{a}_{i}+\boldsymbol{y},\boldsymbol{a}_{i}+\boldsymbol{x}+\boldsymbol{y}\}$ for $i=1,\dots,2^{n-2}-1$ with addition taken in $\Z_{2}$. Define a vertex map $\Lambda: vt(\mathcal{K}_{1}^{n})\rightarrow vt(\mathcal{K}_{2}^{2^{n-2}+1})$ with the following assignment: 
\begin{align*}
& \boldsymbol{x}\mapsto \boldsymbol{e_{1}} \qquad \boldsymbol{y}\mapsto \boldsymbol{e_{2}} \qquad \boldsymbol{a_{i}}\mapsto \boldsymbol{e_{i+2}} \\
& \boldsymbol{x+y}\mapsto \boldsymbol{e_{1}+e_{2}} \\
& \boldsymbol{a_{i}+x}\mapsto \boldsymbol{e_{1}+e_{i+2}} \\
& \boldsymbol{a_{i}+y}\mapsto \boldsymbol{e_{2}+e_{i+2}} \\
& \boldsymbol{a_{i}+x+y}\mapsto \boldsymbol{e_{1}+e_{2}+e_{i+2}}
\end{align*}
where $\{\boldsymbol{e_{j}}\}_{j=1}^{2^{n-2}+1}$ is the standard basis. It suffices to verify that $\Lambda$ induces a non-degenerate simplicial map $\widetilde{\Lambda}$ from $\mathcal{K}_{1}^{n}$ to $\mathcal{K}_{2}^{2^{n-2}+1}$. Apparently, for each simplex $\sigma \in \mathcal{K}_{1}^{n}$, $A_{0}\not\subseteq vt(\sigma)$. Let $\sharp(X)$ denote the number of elements in set $X$, then there are three different cases: 

\vspace{0.5cm}
\noindent \emph{Case 1}\quad $\sharp(A_{0}\cap vt(\sigma))=2$. \\
By linear dependency, $\sharp(A_{i}\cap vt(\sigma))\leq 1$ for any $i\geq 1$. Thus, images of $vt(\sigma)$ are parts of columns in the matrix equivalent to
$$
P=\begin{pmatrix}
P_{2} & * \\
0 & I_{2^{n-2}-1}
\end{pmatrix}
$$
where $P_{2}$=$\left(\begin{smallmatrix} 1 & 0\\ 0 & 1 \end{smallmatrix}\right)$ or $\left(\begin{smallmatrix} 1 & 1\\ 0 & 1 \end{smallmatrix}\right)$ and $I_{2^{n-2}-1}$ stands for identity matrix of dimension $2^{n-2}-1$. 

\vspace{0.5cm}
\noindent \emph{Case 2}\quad $\sharp(A_{0}\cap vt(\sigma))=1$. \\
By linear dependency, there exists at most one index $i_{0}\geq 1$ such that $\sharp(A_{i_{0}}\cap vt(\sigma))=2$ while $\sharp(A_{i}\cap vt(\sigma))\leq 1$ is valid for any other $i\geq 1$. Take subtraction between columns if such $i_{0}$ does exist, then images of $vt(\sigma)$ are parts of columns in the matrix equivalent to
$$
Q=\begin{pmatrix}
Q_{2} & * \\
0 & I_{2^{n-2}-1}
\end{pmatrix}
$$
where $Q_{2}$=$\left(\begin{smallmatrix} 1 & 0\\ 0 & 1 \end{smallmatrix}\right)$ or $\left(\begin{smallmatrix} 1 & 1\\ 0 & 1 \end{smallmatrix}\right)$ or $\left(\begin{smallmatrix} 1 & -1\\ 0 & 1 \end{smallmatrix}\right)$. 

\vspace{0.5cm}
\noindent \emph{Case 3}\quad $\sharp(A_{0}\cap vt(\sigma))=0$. \\
Similar to \emph{Case 2}, either there exists at most one index $j_{0}\geq 1$ such that $\sharp(A_{j_{0}}\cap vt(\sigma))=3$ while $\sharp(A_{j}\cap vt(\sigma))\leq 1$ for any other $j\geq 1$, or there are at most two indices $j_{1},j_{2}\geq 1$ such that $\sharp(A_{j_{1}}\cap vt(\sigma))$=$\sharp(A_{j_{2}}\cap vt(\sigma))=2$ while $\sharp(A_{j}\cap vt(\sigma))\leq 1$ for any other $j\geq 1$. Take subtraction between columns if such $j_{0}$ or $j_{1},j_{2}$ do exist, then images of $vt(\sigma)$ can also be viewed as parts of columns in the matrix equivalent to $Q$. \\
Since $\mathrm{det}(P)=\mathrm{det}(Q)=1$, the induced map $\widetilde{\Lambda}$ is non-degenerate as desired. 
\end{proof}

\begin{remark} \label{Erokhovets}
Erokhovets \cite{Er14} gave an upper bound of $r(K)$ for general simplicial complex $K$ on $[m]$ in terms of minimal non-simplices: $r(K)\leq \sum_{i=0}^{l}{\mathrm{dim}\omega_{i}}$ if there exists a collection of minimal non-simplices $\{\omega_{i}\}_{i=0}^{l}$ such that $\cup_{i=0}^{l}{\omega_{i}}=[m]$. Take $K=\mathcal{K}_{1}^{n}$ and $\omega_{i}=A_{i}$, then an upper bound $3\cdot2^{n-2}-1-n$ is obtained for $\Delta(\mathcal{K}_{1}^{n})$. Theorem \ref{K1n theorem} can be regarded as an improvement of this result and it gives sharp upper bound when $n\leq 4$.
\end{remark}

\begin{remark} 
Choosing one primitive vector in $\mathcal{K}_{1}^{n}$, an upper bound $2^{n-1}-n$ can be obtained for any $n\geq 1$ by similar argument. However, similar construction can not give out better results. If three linearly independent primitive vectors in $\mathcal{K}_{1}^{n}$ are chosen at the beginning, then for any simplex $\sigma\in \mathcal{K}_{1}^{n}$, the images of $vt(\sigma)$ can be viewed as parts of columns in the matrix equivalent to 
$$
R=\begin{pmatrix}
R_{3} & * \\
0 & I_{2^{n-3}-1}
\end{pmatrix}.
$$
Here $R_{3}$ may be equal to $\left(\begin{smallmatrix} 1 & 1 & 1\\ 1 & 0 & -1\\ 1 & -1 & 0\end{smallmatrix}\right)$ due to necessary column subtractions, leading to $\mathrm{det}R=-3$ instead of $\pm 1$. Starting from choosing more linearly independent primitive vectors causes more problems like this.
\end{remark}

\begin{example}
For $n=4$, take primitive vectors $\boldsymbol{x},\boldsymbol{y}$ as $(1,0,0,0)^{\mathrm{T}}$ and $(0,1,0,0)^{\mathrm{T}}$ respectively, then $\Lambda: vt(\mathcal{K}_{1}^{4})\rightarrow vt(\mathcal{K}_{2}^{5})$ is defined as follow: 
\begin{align*}
& \begin{pmatrix}
1 & 0 & 1 & 0 & 1 & 0 & 1 & 0 & 1 & 0 & 1 & 0 & 1 & 0 & 1 \\
0 & 1 & 1 & 0 & 0 & 1 & 1 & 0 & 0 & 1 & 1 & 0 & 0 & 1 & 1 \\
0 & 0 & 0 & 1 & 1 & 1 & 1 & 0 & 0 & 0 & 0 & 1 & 1 & 1 & 1 \\
0 & 0 & 0 & 0 & 0 & 0 & 0 & 1 & 1 & 1 & 1 & 1 & 1 & 1 & 1
\end{pmatrix} \\
\mapsto & \begin{pmatrix}
1 & 0 & 1 & 0 & 1 & 0 & 1 & 0 & 1 & 0 & 1 & 0 & 1 & 0 & 1 \\
0 & 1 & 1 & 0 & 0 & 1 & 1 & 0 & 0 & 1 & 1 & 0 & 0 & 1 & 1 \\
0 & 0 & 0 & 1 & 1 & 1 & 1 & 0 & 0 & 0 & 0 & 0 & 0 & 0 & 0 \\
0 & 0 & 0 & 0 & 0 & 0 & 0 & 1 & 1 & 1 & 1 & 0 & 0 & 0 & 0 \\
0 & 0 & 0 & 0 & 0 & 0 & 0 & 0 & 0 & 0 & 0 & 1 & 1 & 1 & 1
\end{pmatrix}. 
\end{align*}
Since $p_{j}\circ \Lambda\neq id_{j}$ for $j
\leq 4$, this map is different from the construction given in \cite{Sun17}.
\end{example}

\section{Appendix} \label{appendix}

If $\Delta(\mathcal{K}_{1}^{5})=1$, then a map $f: vt(\mathcal{K}_{1}^{5}) \rightarrow vt(\mathcal{K}_{2}^{6})$ can be constructed such that it induces a non-degenerate simplicial map $\tilde{f}: \mathcal{K}_{1}^{5} \rightarrow \mathcal{K}_{2}^{6}$. The following claim shows that such map does not exist if two additional restrictions used in the construction of $\Lambda: vt(\mathcal{K}_{1}^{5}) \rightarrow vt(\mathcal{K}_{2}^{7})$ are required. 

\begin{claim}
There does not exist a map $f: vt(\mathcal{K}_{1}^{5}) \rightarrow vt(\mathcal{K}_{2}^{6})$ satisfying the following conditions: \\
(1) $\forall\ \{\boldsymbol{v_{i_{1}}},\boldsymbol{v_{i_{2}}},\boldsymbol{v_{i_{3}}},\boldsymbol{v_{i_{4}}},\boldsymbol{v_{i_{5}}}\} \subseteq vt(\mathcal{K}_{1}^{5})$ with $\mathrm{det}_{\Z_{2}}(\boldsymbol{v_{i_{1}}},\boldsymbol{v_{i_{2}}},\boldsymbol{v_{i_{3}}},\boldsymbol{v_{i_{4}}},\boldsymbol{v_{i_{5}}})=e$, $\exists\ \boldsymbol{\alpha}=(a_{1},\dots,a_{6})^{\mathrm{T}} \in \Z^{6}$ such that $\mathrm{det}(f(\boldsymbol{v_{i_{1}}}),f(\boldsymbol{v_{i_{2}}}),f(\boldsymbol{v_{i_{3}}}),f(\boldsymbol{v_{i_{4}}}),f(\boldsymbol{v_{i_{5}}}),\boldsymbol{\alpha})=\pm 1$. \\
(2) $p_{j} \circ f = id_{j}$ for $1 \leq j \leq 5$. \\
(3) $f|_{V_{j}}$ is constant for $1 \leq j \leq 5$. 
\end{claim}

\begin{proof}
Write $\phi=p_{6} \circ f$ and $w_{j}=\phi|_{V_{j}}$ for $1 \leq j \leq 5$. The claim follows from the fact that (1)-(3) can not be satisfied simutaneously for matrices \\
\[
N_{1}^{a}(0,0,0,0)=\begin{pmatrix} 1 & 1 & 1 & 0 & 0 \\ 1 & 1 & 0 & 1 & 0 \\ 1 & 0 & 1 & 1 & 0 \\ 0 & 1 & 1 & 1 & 0 \\ 0 & 0 & 0 & 0 & 1 \end{pmatrix} \qquad X_{2}=\begin{pmatrix} 1 & 0 & 1 & 1 & 0 \\ 0 & 1 & 1 & 0 & 0 \\ 0 & 1 & 0 & 1 & 0 \\ 0 & 0 & 1 & 1 & 1 \\ 1 & 0 & 1 & 0 & 1 \end{pmatrix}.
\]

Due to the fact $\mathrm{det}N_{1}^{a}=\mathrm{det}X_{2}=-3$, only modulo 3 values of $\{w_{j}\}_{j=1}^{5}$ need to be verified. Since in $X_{2}$, the third row and the fifth row add up to $(1,1,1,1,1)$ and sum of all rows equals to $(2,2,4,3,2)$, $(w_{2},w_{3},w_{4})$ can not be any linear combination of $(1,1,1)$ and $(2,0,1)$. Similarly, verification on $N_{1}^{a}(0,0,0,0)$ shows that $(w_{2},w_{3},w_{4})=(1,0,0)$ is not allowed. Note that $(1,1,1),(2,0,1)$ and $(1,0,0)$ form a basis of $\mathbb{Z}^{3}$(and thus $\mathbb{Z}_{3}^{3}$), there is no solution for $(w_{2},w_{3},w_{4})$. 
\end{proof}

\bigskip

\noindent \textbf{Acknowledgement.} The author would like to thank professor Zhi L$\ddot{\mathrm{u}}$ for introducing this topic to him and making valuable  discussions.

\mbox{}\\
Qifan Shen\\
School of Mathematical Sciences  \\
Fudan University \\
220 Handan Road  \\
Shanghai 200433  \\
People's Republic of China \\
E-Mail: qfshen17@fudan.edu.cn \mbox{}\\

\end{document}